\def\carre{\hfill $\Box$}
\def\id{{\rm Id}}
\def\tf{\tilde{f}}
\def\tg{\tilde{g}}
\def\td{\widetilde{\Delta}}        
\def\cU{\mathcal U}
\def\tI{\tilde{I}}
\def\tN{\tilde{N}}
\def\bC{{\bar C}}
\def\cA{\mathcal A}
\def\eps{\varepsilon}
\def\ci{     \circ}
\def\b{          \beta}
\def\a{          \alpha}
\def\cA{          \mathcal A}
\def\cD{          \mathcal D}
\def \vp{{\varphi}}
\def \si{{\sigma}}
\def \R{{\mathbb R}}
\def \la{{\lambda}}
\def \Z{{\mathbb Z}}
\def \N{{\mathbb N}}
\def \t{{\theta}}
\def \l{{\lambda}}
\def \ci{{C^\infty}}
\newcommand{\T}{{\mathbb T}}
\newcommand{\prf}{{\begin{proof}}}
\newcommand{\epf}{{\end{proof}}}
\newtheorem{theo}{\sc Theorem}
\newtheorem{prop}{\sc Proposition}
\newtheorem{lemm}{\sc Lemma}
\theoremstyle{definition}
\def\bee{\begin{equation}}
\def\eee{\end{equation}}
\def\f{\varphi}
\def\p{\psi}
\theoremstyle{remark}
\newcommand{\pdvr}[2]
{\dfrac{\partial^{#2} #1}{\partial \theta^{#2_1} \partial r^{#2_2}}}
\newcommand{\pdvrs}[2]
{\partial^{#2} #1 /\partial \theta^{#2_1} \partial r^{#2_2}}
\newcommand{\ft}{{f_t}}
\newcommand{\RR}{{\mathbb R}}
\newcommand{\ZZ}{{\mathbb Z}}
\theoremstyle{definition}
\newtheorem{definition}{ Definition}
\numberwithin{equation}{section}
\begin{document}

\title[KAM rigidity for partially hyperbolic affine $\Z^k$ actions]{KAM rigidity for partially hyperbolic affine $\Z^k$ actions on the torus with a rank one factor}
\author[Danijela Damjanovi\'c and Bassam Fayad]{Danijela Damjanovi\'c$^1$  and Bassam Fayad $^2$}
\thanks{ $^1$ Based on research supported by NSF grant  DMS-0758555 }


\subjclass[2012]{}


\address{Department of Mathematics, \\
      Rice University\\6100 Main st\\Houston, TX 77005}
\email{dani@rice.edu\\}


\begin{abstract} We show that ergodic affine $\mathbb Z^k$ actions on the torus, that have a rank-one factor in their linear part, are locally rigid in a KAM sense if and only if the rank one factor is trivial and the action is higher-rank transversally to this factor. 
Since \cite{DK} proves that affine actions with higher-rank linear part are locally rigid, our result completes the local rigidity picture for affine actions on the torus.

\end{abstract}

\maketitle

\section{Introduction}

\subsection{Local rigidity of $\Z^k$ actions} 

A smooth  $\Z^k$ action $\rho$ on a smooth manifold $M$ is said to be locally rigid if there exists a neighborhood $\mathcal U$ of $\rho$ in the space of smooth  $\Z^k$ actions on $M$, such that for every $\eta\in \mathcal U$ there is a $C^\infty$ diffeomorphism $h$ of $M$ such that $h\circ \rho\circ h^{-1}=\eta$. 

When $k=1$ or if the $\Z^k$ action has a factor which is (perhaps up to a finite index subgroup) an action of $\mathbb Z$, then one cannot expect to have local rigidity as described above. The only known situation in rank-one dynamics where some form of  local rigidity happens is for Diophantine toral translations, where translation vectors with respect to invariant probability measures serve as moduli. If a Diophantine translation is perturbed into a parametric family of diffeomorphisms and if the translation vectors relative to invariant measures satisfy an adequate transversality condition, then for a large set of parameters the diffeomorphisms of the family are smoothly conjugate to translations. This is a consequence of KAM theory (after Kolmogorov, Arnol'd and Moser) and we call it {\it KAM rigidity}.   A typical example is given by Arnol'd family of circle diffeomorphisms \cite{A} where transversality { in this case} amounts to the requirements that the rotation number of the diffeomorphisms should often be Diophantine. The latter example { will be a paradigm }  in the subsequent study of partially hyperbolic affine actions.

{ 
\subsection{Local rigidity of higher rank actions by automorphisms of the torus} }



A $\Z^k$, $k\ge 2$  action which has no rank-one factors is called a  \emph{genuinely} higher-rank action, or just a higher-rank action. 

For ergodic actions by toral automorphisms it is proved in \cite{Starkov} that the action has no rank-one factors if and only if: 

{\it (HR) The  $\Z^k$  action contains a subgroup $L$ isomorphic to $\Z^2$  such that every element in $L$, except for identity, acts ergodically with respect to  the standard invariant measure obtained
from Haar measure.} 

This condition may be viewed as a general paradigm for any form of rigidity of an algebraic action. 

Notice that the condition (HR) for a $\Z^k$  action by toral automorphisms implies that the action is partially hyperbolic, since ergodicity for a single toral automorphism immediately implies partial hyperbolicity.

The local picture for ergodic higher-rank $\Z^k$ actions on the torus by toral automorphisms is fairly well understood. The condition \emph{(HR)}  is a necessary and sufficient condition for local rigidity ( \cite{DK} and references therein). 



The { local rigidity} result in \cite{DK} extends to \emph{affine} actions on the torus whose linear parts are actions which satisfy the (HR) condition.

{ The specificity of affine actions }appears nevertheless if the  linear part violates the assumption (HR). 
For example take the $\mathbb Z^2$ action on $\T^{d+1}$ generated by $A \times \text {Id}$,  $B \times \text {Id}$, with $A$ and $B$  two hyperbolic commuting toral automorphisms of $\T^d$. Of course this $\Z^2$ action  does not satisfy the ergodicity assumption in the general paradigm. 
But in the affine setting, the $\mathbb Z^2$ action generated by  $A \times R_\a$ and  $B \times R_\beta$, where $R_\a$ and $R_\beta$ are two circle rotations such that $1,\a,\beta$ are rationally independent,  satisfies the ergodicity assumption of the general paradigm, while its linear part does not. This action is clearly  not locally rigid. We can for example change the frequencies, but even with fixed frequencies, Anosov-Katok  Liouville constructions show that we can perturb $R_\a \times R_\beta$ into a non linearizable commuting pair { of circle diffeomorphisms.} 

In this paper we consider affine actions on the torus  which have as their linear part a $\mathbb Z^k$ action which does not satisfy (HR). We show that for such actions { certain kind of local rigidity} may be established if and only if { there exists a set of generators}  of the linear part  given by  $A_i \times \text {Id}$ where $A_1,\ldots,A_k$ satisfy (HR).

Since the statements for $\Z^k$ actions are exactly similar to the ones for  $\Z^2$ actions, we state our results in the latter case for better readability of the results and the proofs.

It is easy to see that local rigidity of an affine action $\rho$ whose linear part does not satisfy (HR) can only be possible if its generators, after a coordinate change,   are of the form 
\begin{equation} \label{normalform} \bar A=(A+a) \times (\text {Id}_{\T^{d_2}}+\f),  \bar B= (B+b) \times (\text {Id}_{\T^{d_2}}+\p)\end{equation}  with $A$ and $B$  two 
commuting toral automorphisms of $\T^{d_1}$  that satisfy  (HR), where $d_1+d_2=d$, and   $a,b,\f, \psi$ are translation vectors. Indeed, if the action generated by the generators  of the linear part of $\rho$ has a rank-one factor then up to a coordinate change in $\Z^2$ we may assume that 
$\bar A=(A+a) \times (Id+\f)$ and $\bar B= (B+b) \times (C+\p)$, where  $A$ and $B$ generate  a linear action, $(\f, \psi)$ are translation vectors, and $C$ is a linear map. The commutativity condition  implies that   $\f$ belongs to the eigenspace $V_1$ of $C$ relative to the eigenvalue $1$. If $C \neq \text{Id}$, { projecting to the orthocomplement of $V_1$ leaves us with an action generated by $\bar A=(A+a) \times Id$ and $\bar B= (B+b) \times (\tilde{C}+\tilde{\p})$. The local rigidity of the action $\rho$ then requires a local rigidity result for the rank one action  $\tilde{C}+\tilde{\p}$ which obviously does not hold.}

{ For a $\Z^2$ partially hyperbolic affine action whose generators satisfy  (\ref{normalform}) it is possible to state  a rigidity theorem in a similar fashion as for 
 perturbations of quasi-periodic translations :}  Let $(f,g)$ be a perturbation of the generators $\bar A$ and $\bar B$ of such an action. First of all, since the linear parts of $f$ and $g$ are given by $A \times \text {Id}$,   $B \times \text {Id}$, on $\T^{d_1+d_2}$, one can define for any pair $\mu_1,\mu_2$ of invariant probability measures by $f$ and $g$ respectively  the translation vectors { along the $\T^{d_2}$ direction} corresponding to these measures as follows 
\begin{align*} \a&=\rho_{\mu_1}(f)= \int_{\T^{d}} \pi_2(f(x)-x) d\mu_1(x), \\
 \beta&=\rho_{\mu_2}(g)= \int_{\T^{d}} \pi_2(g(x)-x) d\mu_2(x)\end{align*}
where $\pi_2$ is the projection on the $\T^{d_2}$ variable.
We say that $(\a,\beta) \in \T^{d_2} \times \T^{d_2}$ is  simultaneously Diophantine with respect to a pair of numbers $(\lambda,\mu)$ if there exists $\tau,\gamma>0$ such that 
$$\max( |\lambda -e^{i2\pi (k,\a)}|, |\mu-e^{i2\pi (k,\beta)} |)> \frac{\gamma}{|k|^\tau}$$
where $\|\cdot\|$ denotes the closest distance to the integers, and we denote this property by $(\a,\beta)\in \text{SDC}(\tau,\gamma,\lambda,\mu)$.  We say that 
$(\a,\beta)\in \text{SDC}(\tau,\gamma,\bar A, \bar B)$ if given any pair of eigenvalues $(\lambda,\mu)$  of  $(\bar A,\bar B)$, it holds that $(\a,\beta)\in \text{SDC}(\tau,\gamma,\lambda,\mu)$.
Observe that SDC pairs of vectors relatively to any pair $(\bar A,\bar B)$ form a set of full Haar measure in   $\T^{d_2} \times \T^{d_2}$.

We have the following 
\begin{theo}\label{main2} Let $f,g$, be the generators of a smooth ($C^\infty$)  $\Z^2$ action on $\T^d$ such that  the linear part of $(f,g)$ is given by $\bar A=A \times \text {Id}_{\T^{d_2}}$,  $\bar B= B \times \text {Id}_{\T^{d_2}}$, with $A$ and $B$  two  
 commuting toral automorphisms of $\T^{d_1}$  that satisfy  (HR), $d_1+d_2=d$. For any $\tau,\gamma>0$, there exist $r(\tau)$ and $\eps(\tau,\gamma)$ such that if  for some pair of invariant probability measures $\mu_1,\mu_2$ by $f$ and $g$ respectively we have that $Ê(\a,\beta)=$ $(\rho_{\mu_1}(f),\rho_{\mu_2}(g)) \in  \text{SDC}(\tau,\gamma,\bar A, \bar B)$ and if $\|f-(A+a)\times T_\a\|_r\leq \eps$, $\|g-(B+b)\times T_\beta\|_r\leq \eps$,  { where $a, b\in \mathbb R^{d_1}$ and $T_\a$ and $T_\beta$ are translations of $\mathbb T^{d_2}$}; then the action is linearizable, namely there exists $h \in \text{Diff}^\infty (\T^d)$ such that $h \circ f \circ h^{-1} =  (A+a)\times T_\a$, $h \circ g \circ h^{-1} =  (B+b)\times T_\beta$.
\end{theo}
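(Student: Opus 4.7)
The plan is to run a KAM--Newton iteration in the $C^\infty$ category around the affine model $F_0 = (A+a) \times T_\alpha$, $G_0 = (B+b) \times T_\beta$. Writing $f = F_0 + \Delta f$ and $g = G_0 + \Delta g$, at each step I look for a near-identity conjugacy $h = \id + H$ such that the transformed pair $(hfh^{-1}, hgh^{-1})$ is an $O(\|\Delta\|^2)$ perturbation of $(F_0, G_0)$. To leading order this reduces to the simultaneous linearized conjugacy system
\[
DF_0 \cdot H - H \circ F_0 = \Delta f, \qquad DG_0 \cdot H - H \circ G_0 = \Delta g,
\]
to be solved with tame estimates in $C^r$ norms. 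Granted tame solvability with loss of derivatives polynomial in the Diophantine exponent $\tau$, the standard quadratic-convergence Newton scheme with smoothing operators produces a $C^\infty$ conjugacy under the smallness condition $\|f - F_0\|_r, \|g - G_0\|_r \leq \eps(\tau,\gamma)$ for $r = r(\tau)$ large enough.

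The core technical point is the tame solvability of the linearized system. Split $H = (H_1,H_2)$ according to $\T^d = \T^{d_1} \times \T^{d_2}$ and Fourier-expand in the $y$ variable: $H_j(x,y) = \sum_{k \in \Z^{d_2}} H_j^{(k)}(x)\, e^{2\pi i (k,y)}$. Since $DF_0 = \diag(A, \id)$ and $F_0$ translates $y$ by $\alpha$, the equation decouples over $k$. On each mode one gets, on $\T^{d_1}$, a pair of twisted cohomological equations
\[
A H_1^{(k)}(x) - e^{2\pi i (k,\alpha)} H_1^{(k)}(Ax+a) = (\Delta f_1)^{(k)}(x), \quad B H_1^{(k)}(x) - e^{2\pi i (k,\beta)} H_1^{(k)}(Bx+b) = (\Delta g_1)^{(k)}(x),
\]
together with the scalar analogue for $H_2^{(k)}$ (in which the premultiplication by $A, B$ is replaced by $\id$). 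Simultaneously triangularizing the commuting pair $(A,B)$ on joint generalized eigenspaces, each such equation reduces to a pair of scalar twisted equations $\lambda u - e^{2\pi i (k,\alpha)} u \circ A = \phi$ and $\mu u - e^{2\pi i (k,\beta)} u \circ B = \psi$ on $\T^{d_1}$, where $(\lambda, \mu)$ ranges over joint eigenvalue pairs of $(\bar A, \bar B)$.

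The two small-divisor regimes --- the twist $e^{2\pi i (k,\alpha)}$ nearly matching an eigenvalue $\lambda$ of $A$ and the analogous coincidence for $\mu$ and $\beta$ --- are exactly decoupled by the SDC hypothesis: for every joint eigenvalue pair at least one of $|\lambda - e^{2\pi i (k,\alpha)}|$ and $|\mu - e^{2\pi i (k,\beta)}|$ is $\geq \gamma/|k|^\tau$, so the near-resonance of one equation is compensated by the solvability of the other. Each individual twisted scalar equation is then solved by iteration along the $A^T$- (resp.\ $B^T$-) orbit on Fourier modes in $\Z^{d_1}$, and the (HR) property of $(A,B)$ guarantees that these orbits sweep $\Z^{d_1}\setminus\{0\}$ without short resonances, yielding tame estimates in the spirit of the cohomological equation of \cite{DK}. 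The commutativity $fg = gf$ provides, modulo quadratic errors, a compatibility relation between $\Delta f$ and $\Delta g$ under which the two twisted equations determine a common $H_j^{(k)}$. The $k=0$ mode is controlled by the eigenvalue pair $(1,1)$ and reduces to the pure Diophantine cohomological equation; the a priori matching of the rotation vectors of $f, g$ to $(\alpha,\beta)$ via the invariant measures $\mu_1, \mu_2$ kills the mean-zero obstruction on the $\T^{d_2}$ component, so the affine model does not drift during the iteration.

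The step I expect to be the main obstacle is pushing the simultaneous twisted cohomological equation through with uniform tame estimates: one must show that the polynomial loss $|k|^\tau$ from SDC combines with the orbit-length loss on $\Z^{d_1}$ coming from the $(A,B)$-iteration in such a way that the total loss of derivatives remains polynomial, uniformly in $k$ and in the choice of eigenvalue pair. Once this estimate is in hand, the remainder is the standard Newton-plus-smoothing scheme, which absorbs a polynomial loss of derivatives and delivers the $C^\infty$ conjugacy claimed in the theorem.
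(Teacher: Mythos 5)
Your proposal follows essentially the same route as the paper: a Newton/KAM iteration whose linearized step is the simultaneous twisted cohomological system, solved by combining the higher-rank orbit-summation method of \cite{DK} on the hyperbolic Fourier modes (where commutativity kills the obstructions up to a quadratic error) with Moser's SDC trick on the elliptic modes, and with the invariant-measure rotation vectors pinning down the translation part. The tame-estimate issue you flag is exactly what the paper handles in Lemma \ref{splittinglemma} and its Claims 1--2, so the approaches coincide.
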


{ In the case $d_2=1$, the SDC condition is reminiscent of the one used by Moser to prove local rigidity of commuting circle diffeomorphisms with this condition on their rotation numbers \cite{M}. The ingredients of the proof of Theorem \ref{main2} are indeed a mixture of the ingredients used in the higher rank rigidity of toral automorphisms \cite{DK} and the KAM rigidity in the quasi-periodic setting as in \cite{A} and \cite{M}.  }

Also similar to the perturbations of quasi-periodic translations of the torus it is possible to state a rigidity theorem for a parametric family of $\Z^2$ actions.

Let $\rho_t$ be a family of $\Z^2$ actions where the parameter $t \in [0,1]$. 

Given $t$, the generators $ f_t, g_t$ of the $\Z^2$ action $\rho_t$  may be viewed as  $f_t=\bar A+\bar a_t$ and $g_t= \bar B+\bar b_t$, where  $\bar A$ and $\bar B$ generate  a linear action. If the linear action generated by $\bar A$ and $\bar B$ has a rank-one factor then up to a coordinate change in $\Z^2$ we may assume that the affine action $\rho_t$ is generated by 
$f_t=(A+a_t) \times (Id+\f(t))$ and $g_t= (B+b_t) \times (C+\p(t))$, where  $A$ and $B$ generate  a linear action, $(a_t,b_t)$ and $(\f(t), \psi(t))$ are translation vectors, and $C$ is a linear map. Arguing as in the case of a single action, we see that for any kind of rigidity to hold it is necessary that $C=Id$. Indeed, if $C$ is not Identity we can reduce to the case $f_t=A \times Id$ and $g_t=B \times (C+\p(t))$.
The latter can be perturbed into the family of actions generated by $f_t, \tilde{g}_t=B\times h_t$ with $h_t$ any perturbation of the family $C+\p(t)$ that can be chosen to be non linearizable for all $t$. 

To state a KAM rigidity result when $C= Id$ we need some transversality on the frequencies along the elliptic factor of the action. We will use a Pyartli  \cite{P} Êtype condition but other usual transversality conditions in KAM theory may be applied as well.

\begin{definition} We say that a function $\rho \in C^r([0,1], \T^d)$, $r\geq d$,  satisfies a Pyartli condition if for any $t \in [0,1]$ we have that the first $d$ derivatives of $\rho$ are linearly independent.  There exists then a constant $\nu>0$ such that 
\begin{equation} \label{pyartlinu}  |\text{det}(\rho', \rho'',\ldots, \rho^{(d)})| \geq \nu, \quad \| \rho \|_d \leq \nu^{-1} 
  \end{equation}
\end{definition}


\begin{theo}\label{main} Let $f_t,g_t$, $t\in [0,1]$ generate a family $\rho_t$ of  affine  $\Z^2$ actions on $\T^d$ which is of class $C^d$ in he  parameter  $t$. Then the following alternative holds in function of the common linear part $(\bar A,\bar B)$ of the family $(f_t,g_t)$.

(1) $(\bar A,\bar B)$ satisfies (HR) and every action in the family is locally rigid.

(2) $f_t=(A+a_t) \times (Id+\f(t))$ and $g_t= (B+b_t) \times (Id+\p(t)).$ 
 If  the function $n \f(t)+ m \p(t)$ satisfies a Pyartli condition for some $(n, m)\in \mathbb Z^2$  {  (for $d=d_2$ and with some constant $\nu$ and if in addition $\|\f\|_{d_2}, \| \psi \|_{d_2} \leq \nu^{-1}$ ),} then the family $\rho_t$ is KAM locally rigid: there exists { $r_0(A,B,n,m,d_2)$  such that  
for any $\eta$ there exists $\eps(\eta,n,m,\nu)$} such that if the action $\rho_t$ is perturbed into $\tilde{\rho}_t$ generated by $\tilde{f}_t$ and $\tilde{g}_t$ such that $\|\tilde{f}_\cdot-f_\cdot\|_{d,r_0}\leq \eps$,
$\|\tilde{g}_\cdot-g_\cdot\|_{d,r_0}\leq \eps$, then the set of parameters $t$ for which $(\tilde{f}_t,\tilde{g}_t)$  are simultaneously smoothly linearizable  is larger than $1-\eta$.  

(3) None of the actions in the family is locally rigid and the family is not KAM locally rigid : it can be perturbed so that no element of the perturbed family is linearizable.

\end{theo}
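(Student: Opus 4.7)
The proof naturally splits along the three alternatives, with (1) being dispatched immediately by the main theorem of \cite{DK} applied parameter by parameter. The interesting content is in (2) and (3).

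For alternative (2) the plan is to combine the single action statement of Theorem \ref{main2} with a parameter exclusion argument, in the spirit of Arnold and Moser. First, for the \emph{unperturbed} family, the translation vectors along the elliptic factor are exactly $\varphi(t)$ and $\psi(t)$, and one observes that the pair of eigenvalues $(\lambda,\mu)$ of $(\bar A,\bar B)$ for which the simultaneous Diophantine condition $\text{SDC}(\tau,\gamma,\lambda,\mu)$ can possibly fail is $(\lambda,\mu)=(1,1)$; for all other pairs the eigenvalues are bounded away from the unit circle by hyperbolicity of $A,B$ so $\text{SDC}$ is automatic with a uniform $\gamma$. Thus SDC failure at frequency $k\in\Z^{d_2}$ reduces to $\|(k,\alpha(t))\|$ and $\|(k,\beta(t))\|$ being simultaneously small, which implies $\|(k, n\varphi(t)+m\psi(t))\|$ small for the pair $(n,m)$ given by hypothesis. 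The Pyartli condition on $n\varphi+m\psi$, via the standard sublevel set lemma of \cite{P}, bounds the Lebesgue measure of this bad set by $C (\gamma/|k|^{\tau})^{1/d_2}$ with a constant depending only on $\nu$, which is summable over $k$ provided $\tau$ is chosen large enough. Choosing $\tau$ large, then $\gamma$ small, then $r_0=r(\tau)$ from Theorem \ref{main2}, and finally $\varepsilon$ small, produces a bad set of total measure less than $\eta/2$.

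The next step is to transfer this measure estimate to the perturbed family. For each $t$ pick invariant probability measures $\mu_1(t),\mu_2(t)$ for $\tilde f_t,\tilde g_t$ (by compactness) and form the translation vectors $\tilde\alpha(t),\tilde\beta(t)$; these differ from $\varphi(t),\psi(t)$ by $O(\varepsilon)$ in $C^0$. One needs the same Pyartli-type transversality for the perturbed combination $n\tilde\alpha(t)+m\tilde\beta(t)$: because the Pyartli condition is quantitative and open in $C^d$, it suffices to show that $\tilde\alpha,\tilde\beta$ are close to $\varphi,\psi$ in $C^d$ in $t$, which follows from the assumed $C^d$ regularity of the perturbation in $t$ and standard averaging arguments against invariant measures. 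The main technical obstacle is precisely this control in higher $C^d$ norms on $t$ of the translation vectors, since invariant measures may depend badly on $t$; one resolves this either by choosing $\mu_i(t)$ as the pushforward of Lebesgue measure (using that after one KAM step an invariant Lebesgue-like measure is almost conserved), or by appealing to an iterative KAM scheme in which at each step the parameter dependence is controlled. Once the Pyartli bound on $n\tilde\alpha+m\tilde\beta$ is secured, the parameter exclusion bounds the measure of bad $t$'s by $\eta$, and at each good $t$ the conclusion of Theorem \ref{main2} produces the required conjugacy.

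For alternative (3) the argument is by construction. If the linear part of the family fails (HR) and cannot be brought to the form $(A\times\text{Id},B\times\text{Id})$ by a coordinate change in $\Z^2$, then after reduction as in the discussion before Theorem \ref{main} there is a nontrivial factor of the form $(\text{Id}\times\text{Id},C+\psi(t))$ with $C\neq\text{Id}$, which is a rank-one family of affine maps of a lower dimensional torus. A standard Anosov--Katok construction, realized in a neighborhood of $C+\psi(t)$ and depending measurably on $t$, produces a family of diffeomorphisms that is non-linearizable for every $t$ (Liouville conjugation trick applied fiberwise in $t$); pulling this back up to $\T^d$ yields a perturbation of $\rho_t$ in which no element is linearizable, and the product structure ensures commutation is preserved. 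If instead the reduced form is $(A+a_t)\times(\text{Id}+\varphi(t)),(B+b_t)\times(\text{Id}+\psi(t))$ but no integer combination $n\varphi+m\psi$ satisfies Pyartli, then $(\varphi(t),\psi(t))$ is contained in a proper rational sub-torus of $\T^{d_2}\times\T^{d_2}$ along which small perturbations produce translations of the elliptic factor with Liouville frequencies constant in $t$; another Anosov--Katok construction in that sub-torus destroys linearizability for every $t$. In both subcases one also checks, by exhibiting perturbations that are not even individually linearizable, that the first clause of (3) holds as well.
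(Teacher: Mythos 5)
Your treatment of parts (1) and (3) matches the paper's: part (1) is quoted from \cite{DK}, and part (3) is reduced, exactly as you do, to perturbing the rank-one factor $C+\psi(t)$ into a non-linearizable family. Two caveats there. First, your extra subcase in (3) (no integer combination $n\varphi+m\psi$ satisfying Pyartli) is not needed for the trichotomy as stated, since (2) is a conditional assertion; moreover your claim that failure of Pyartli for all $(n,m)$ forces $(\varphi(t),\psi(t))$ into a proper rational subtorus is false (take $\varphi$ constant equal to an irrational Liouville vector). Second, in (2), the eigenvalues of an automorphism satisfying (HR) need not be bounded away from the unit circle --- (HR) only yields partial hyperbolicity --- so SDC is not automatic for pairs $(\lambda,\mu)\neq(1,1)$; this is why the paper's resonance set $\cD(N,A)$ runs over all of $\mathcal E(A)$, not just over $\lambda=1$. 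That part of your exclusion estimate is repairable by the same Pyartli argument, but as written it is wrong.

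The genuine gap is in part (2), at exactly the step you flag as ``the main technical obstacle'' and then set aside. To invoke Theorem \ref{main2} at a given $t$ you must verify SDC for the rotation vectors $(\tilde\alpha(t),\tilde\beta(t))$ of the \emph{perturbed} maps, and to conclude that the set of such $t$ has measure at least $1-\eta$ you need a Pyartli-type sublevel estimate for $n\tilde\alpha+m\tilde\beta$. But $\tilde\alpha(t)$ is defined through a choice of invariant measure for $\tilde f_t$, which in general depends only measurably on $t$; and even with a canonical choice you only get $C^0$-closeness $|\tilde\alpha-\varphi|=O(\eps)$, which says nothing about the resonances $\|(k,\tilde\alpha(t))\|<\gamma|k|^{-\tau}$ once $\gamma|k|^{-\tau}\ll\eps$, i.e.\ about all but finitely many $k$. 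A one-shot exclusion performed on the unperturbed frequencies therefore does not transfer to the perturbed family, and ``openness of the Pyartli condition in $C^d$'' cannot be applied because no $C^d$ control of $\tilde\alpha$ in $t$ is available. This is precisely why the paper does not deduce Theorem \ref{main} from Theorem \ref{main2}; the logical order is the reverse (Section \ref{proofmain2} derives Theorem \ref{main2} from the machinery built for Theorem \ref{main3}). The paper runs the exclusion \emph{inside} the iteration: Proposition \ref{mainprop} produces at each step new frequency functions $\varphi_{n+1},\psi_{n+1}$ differing from $\varphi_n,\psi_n$ by the quadratically small quantity $\eps_{n,0}$ in the Lipschitz (resp.\ $C^{d_2}$) norm, so the transversality constant survives the whole scheme, and Lemma \ref{exclusion} (resp.\ Lemma \ref{exclusiond}) is applied at step $n$ only to frequencies $|k|\leq N_{n+1}$, giving the measure bound $\prod_n(1-2dM^2N_{n+1}^{-1})\geq 1-\eta$. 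Your closing alternative ``appealing to an iterative KAM scheme in which at each step the parameter dependence is controlled'' names the correct fix, but that scheme --- the obstruction-theoretic solution of the linearized equations in Lemma \ref{splittinglemma} and the convergence Lemma \ref{kam} --- is the proof, not a detail to be supplied.
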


We denote by $\|\cdot \|_{d,r}$ the combination of $C^d$ norm in $t$ and $C^r$ norm in the torus variable. Part  (1) of  Theorem \ref{main} is proved in \cite{DK}. { Part (3) reduces as discussed above to the case $f_t=A \times Id$ and $g_t=B \times (C+\p(t))$.  As explained before,} in this paper we combine techniques from \cite{DK} and Arnol'd parameter exclusion technique for perturbations of quasi-periodic translations on the torus \cite{A}, to show Part (2) i.e., rigidity in the KAM sense for affine actions.

For the clarity of the exposition, the proof of Theorem \ref{main} will be first carried in detail only in the case $d_2=1$. The generalization to any $d_2$ is explained in Section \ref{generald2}.  Also, since the proof of Theorem \ref{main2} follows essentially the same lines as the proof of Theorem \ref{main}, we will only give a detailed proof of the former and explain in Section \ref{proofmain2} the main differences required for the proof of the latter.


Affine Anosov actions have been first 
discussed by Hurder in \cite{HurAffine}.  Local rigidity of hyperbolic and then partially hyperbolic affine actions of  higher rank non abelian groups was extensively studied (see for example the survey \cite{Fisher}).
In \cite{FM} Fisher and Margulis provide a complete local picture for affine actions by higher rank  lattices in semisimple Lie groups.  
The methods they use are totally different from ours and are 
speciÞc to groups with Property (T).

Prior to \cite{FM},  the question about local rigidity of perturbations of product actions of large higher rank groups has been addressed  in \cite{NT1}, \cite{NT2}, \cite{T}; the actions considered there are products of the identity action and actions that generalize the standard ${\rm SL}(n,\Z)$ action on $\T^n$.  Local rigidity and deformation rigidity are obtained for such actions. 
We  note that the actions we consider in this paper even though they belong to families of actions, are not deformation rigid in the sense of \cite{Hurder}. 

Local rigidity results for algebraic Anosov actions were obtained by Katok and Spatzier in  \cite{KS}, including the case of toral automorphisms and nilmanifold automorphisms. Currently not much is known about perturbations of affine actions on nilmanifolds when the linear part is a product of a higher rank abelian action and the identity, even when the higher rank abelian action is Anosov.

\subsection{Reduction to actions which are linear transversally to the elliptic factor} 



In the subsequent sections we give the proof of  Theorem \ref{main} in the case when the  { unperturbed} action  transversal to the elliptic factor is purely linear, namely  when $f_t=A \times R_{\f(t)}$ and $g_t= B\times R_{\p(t)}$, where $ R_{\f(t)}$ and $R_{\p(t)}$ denote translation maps on the circle. 
The same arguments  extend to the case when the { unperturbed} action transversal to the elliptic factor  is affine generated by  $A+a_t$ and $B+b_t$ instead of  $A$ and $B$ . The only difference is that in (2.9) the  number $\lambda_{m,t}$ should be replaced with $\lambda_{m,n,t}= e^{-i2\pi (m\varphi(t)+\langle n,a_t\rangle )}\lambda$. { This change  does not affect any subsequent estimates.}

\subsection{Exact statement of Theorem \ref{main} in the case of a one dimensional elliptic factor}

Let $A$ and $B$ be two commuting toral automorphisms satisfying (HR) condition.   
For $\f,\p \in {\rm Lip}(I_0,\R)$, $I_0=[0,1]$, let 
\begin{align*} f_{\f(t)}(x,\theta)&=(Ax,R_{\f(t)}(\theta)) \\ 
g_{\p(t)}(x,\theta)&=(Bx,R_{\p(t)}(\theta))  \end{align*}

For $I \in \R$, we denote $C^{lip,\infty}(I, \T^{d+1},\R^{d+1})$ the set of families of smooth maps  in the $\T^{d+1}$ variable and Lipschitz in the parameter $t \in  I$. We denote $C^{lip,\infty}_0(I,\T^{d+1},\R^{d+1})$ the subset of maps $f \in C^{lip,\infty}(I, \T^{d+1},\R^{d+1})$ such that if we write $f_t(z)=(f^1_t(z),f^2_t(z)) \in \T^d \times \T$, then $\int_{\T^{d+1}} f^2_{t}(z) dz =0$ for $t\in I$.

Consider 

\begin{align*} 
\tf_t(x,\theta)&=f_{\f(t)}(x, \theta)+\Delta f_t(x, \theta) \\
\tg_t(x,\theta)&=g_{\p(t)}(x, \theta)+\Delta g_t(x, \theta) 
\end{align*} 
with $\Delta f,\Delta g \in C^{lip,\infty}_0(I_0,\T^{d+1},\R^{d+1})$ and such that $\tf_t$ and $\tg_t$ commute for all $t\in I_0$. For $f \in C^{lip,\infty}_0(I_0,\T^{d+1},\R^{d+1})$, we use the notation  $\| f \|_{lip(I),r} = \max_{|\iota| \leq r} {\rm Lip}( f^{(\iota)})$ where ${\rm Lip}(f)$ is the maximum of the supnorm of $f$ and its Lipschitz constant, and $|\iota|$ is the maximal coordinate of the multi-index $\iota \in \N^{d+1}$.  We will also use the notation $\|v\|_{0(I),r}$ fort the supremum of the usual $C^r$ norms of $v(t)$ as $t\in I$.

{ Let $M$ be such that $$2\max(\|\f \|_{lip(I_0)},\|\p \|_{lip(I_0)}) \leq M, \quad \inf_{t\in I_0}\varphi'(t) \geq \frac{2}{M}$$}

\begin{theo} \label{main3} There exists $r_0(A,B) \in \N$  such that  for any $\eta$ there exists $\epsilon_0(A,B,M,\eta)>0$ such that if $\max(\| \Delta f \|_{lip(I),r_0}, \| \Delta g \|_{lip(I),r_0}) \leq \epsilon_0$, then the set of parameters $t$ for which the pair $\tf,\tg$ is simultaneously smoothly linearizable has measure larger than $1-\eta$.
\end{theo}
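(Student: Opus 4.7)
The plan is to run a KAM Newton iteration in $t$ that combines the tame resolution of the twisted cohomological equation from \cite{DK} with an Arnol'd-type parameter exclusion \cite{A,M}. At step $j$ we maintain a commuting pair $(\tf^{(j)}_t, \tg^{(j)}_t)$ close to a normal form $(f_{\f_j(t)}, g_{\p_j(t)})$ with an error of size $\eps_j$ in a $C^r$ norm with Lipschitz dependence in $t$, and we look for a conjugacy $h^{(j)}_t = \id + H^{(j)}_t$ together with possibly updated frequencies $\f_{j+1}, \p_{j+1}$ such that the conjugated pair has error $O(\eps_j^{1+\kappa})$ on an inductively chosen set of ``good'' parameters.

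Linearizing the conjugacy equation and Fourier-expanding on $\T^{d+1} = \T^d \times \T$ with dual variables $(n, m) \in \Z^d \times \Z$, the problem splits by frequency. Modes $(n, 0)$ with $n \neq 0$ are purely hyperbolic and are resolved by the higher-rank cohomological technique of \cite{DK}: the (HR) hypothesis on $(A, B)$ provides a tame inverse with fixed finite derivative loss $r_0(A, B)$. Modes with $m \neq 0$ involve twisted divisors of the form $\lambda_{n,\ell}\, e^{2\pi i m \f_j(t)} - 1$ and $\mu_{n,\ell}\, e^{2\pi i m \p_j(t)} - 1$, where $\lambda, \mu$ are dual eigenvalues of $A, B$ along the finite-dimensional span of the $A^T$-orbit of $n$; these are solvable precisely on the set where $(\f_j(t), \p_j(t))$ satisfies $\text{SDC}(\tau, \gamma_j, \bar A, \bar B)$ truncated at scale $N_j$. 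The zero-mode $(0,0)$ determines the frequency updates $\delta\f_j, \delta\p_j$ as the spatial averages of the second component of the error, which vanish at step $0$ by the $C^{lip,\infty}_0$ normalization and remain of order $\eps_j$ thereafter. Compatibility of the two equations (one for $f$, one for $g$) for a single $H^{(j)}$ is ensured by the commutation relation $[\tf^{(j)}_t, \tg^{(j)}_t] = 0$, preserved at each step.

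The parameter exclusion at step $j$ removes the set $\mathcal B_j \subset I_0$ of $t$ where the truncated SDC condition at scale $N_j$ fails. The Pyartli-type hypothesis $\inf_t \f'(t) \geq 2/M$ propagates inductively to $\f_j$ since $\|\delta\f_j\|_{C^1}$ decays geometrically, and it yields the standard measure estimate $|\{t : |\lambda\, e^{2\pi i m \f_j(t)} - 1| < \gamma_j / |(n,m)|^\tau\}| \lesssim M \gamma_j/(|m|\, |(n,m)|^\tau)$ for each $m \neq 0$. Summing over $|(n,m)| \leq N_j$ and then over $j$ with the standard KAM choices $\gamma_j = \eps_j^\sigma$, $N_j \sim \eps_j^{-\sigma'}$ for small $\sigma, \sigma'$, gives $|\bigcup_j \mathcal B_j| < \eta$ provided $\eps_0 = \eps_0(A, B, M, \eta)$ is small enough. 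On the good set $\mathcal G = I_0 \setminus \bigcup_j \mathcal B_j$, super-exponential decay of $\eps_j$ in $C^r$ norm (absorbing the fixed derivative loss $r_0$ by analytic smoothing) ensures that $h^{(0)}_t \circ \cdots \circ h^{(j)}_t$ converges in $C^\infty$ to a smooth linearizing conjugacy.

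The principal technical obstacle is the first, cohomological step: combining the DK orbit-summation for the hyperbolic components with the elliptic small-divisor estimate into a single tame bound. The DK method for $(n, 0)$ modes sums a geometric series along $A^T$- and $B^T$-orbits using the exponential spread of $(A^T)^k n$, but for the mixed $(n, m)$ modes each term of that series carries an additional elliptic divisor, and one must show that the SDC condition applied simultaneously to the full orbit of eigenvalues keeps the series convergent with a loss depending only on $(A, B)$ and not on $m$. This uniform tame estimate is the heart of the matter; once it is established, the Newton iteration and the parameter exclusion proceed along the classical KAM lines sketched above.
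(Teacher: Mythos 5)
Your outline matches the paper's strategy at the level of the Newton iteration and the parameter exclusion, but the step you yourself single out as ``the heart of the matter'' --- the mixed modes $n\neq 0$, $m\neq 0$ --- is both misdescribed and left unproven, and this is a genuine gap. For $n\neq 0$ the Fourier-mode equation is not a divisor equation at all: since $A$ is ergodic, $A^*$ has no nonzero periodic integer vectors, so the relation $\lambda_{m,t}\,h_{n,m,t}-h_{A^*n,m,t}=v'_{n,m,t}$ couples each $n$ to its infinite $A^*$-orbit and is solved by summation along that orbit. The twist $e^{-2\pi i m\varphi(t)}$ enters each term only as a unit-modulus phase (indeed $|\lambda_{m,t}|=|\lambda|$), so no ``additional elliptic divisor'' appears in the series and no SDC condition over the orbit of eigenvalues is needed; convergence and tameness follow from hyperbolicity and the Katznelson lemma, the only new point relative to \cite{DK} being that $\|(A^kn,m)\|$ dominates $|m|$ after $O(\log|m|)$ iterates. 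The genuine small divisors $\lambda-e^{2\pi i m\varphi(t)}$ occur only at $n=0$, where $A^*n=n$, and it is only there that the truncated Diophantine condition (in the parametric theorem, on $\varphi$ alone) and the exclusion of parameters enter.

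What you do not address --- and what is the actual crux --- is that for $n\neq 0$ the single equation $\lambda h-h\circ f_\varphi=v$ carries one obstruction $O^A_{n,m}(v_t)$ per orbit, and these do not vanish for the given perturbation, so the (HR) hypothesis does \emph{not} provide a tame inverse as you assert; it provides only an approximate one. Saying that ``compatibility is ensured by the commutation relation'' does not capture the mechanism: the paper's Claim 2 uses the commutation identity to show that the obstructions themselves satisfy a twisted equation over $B$, whose solution by a second orbit summation (using the joint growth of $B^lA^kn$ from Lemma 4.5 of \cite{DK}) bounds them by the commutator error $\Phi$, which is quadratic in $(\Delta f,\Delta g)$. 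One then subtracts a quadratically small correction $\tilde v$ to kill the obstructions, solves exactly for $h$, and uses the uniqueness half of Claim 1 to transfer the estimate to the $g$-equation. Without this your Newton step does not close: you would be inverting an operator with infinite-dimensional cokernel, and the quadratic smallness of the new error would have no source.
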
 

Sections \ref{section.inductive} and \ref{sec.kam} below are devoted to the proof of Theorem \ref{main3}. Sections \ref{generald2} and \ref{proofmain2} explain how this proof should be modified to give the proof of Theorems \ref{main} and \ref{main2} respectivily. 

\bigskip 

{\bf Acknowledgments.}  The authors are grateful to Artur Avila, Hakan Eliasson, Anatole Katok and Rapha'l Krikorian for fruitful discussions and suggestions. 

\section{The inductive step} \label{section.inductive}

Let $\mathcal E(A)$ be the set of eigenvalues of $A$ union $1$.   For $N\in \mathbb N$, define 
$$\cD(N,A) = \{ \a \in I_0 \ / \ | \l - e^{i2\pi k\a}|\geq N^{-3}, \quad \forall \lambda \in \mathcal E(A), \forall 0<|k|\leq  N \}.$$

\begin{prop}\label{mainprop} There exists $\sigma(A,B)$ such that if $N \in \N$ and  $I$ is an interval such that $I \subset \{t \in I_0  \ / \ \varphi(t) \in \cD(N)\}$, then there  
 exist $\tilde{\f},\tilde{\p} \in {\rm Lip}(I,\R)$ and  $h,\widetilde{\Delta f}, \widetilde{\Delta g}  \in C^{lip,\infty}_0(I,\T^{d+1},\R^{d+1})$  such that if we write $ H=\id + h$ we have that 
\begin{equation}\label{nonlin}
\begin{aligned} H \circ \tf  &= (f_{\tilde{\f}} + \widetilde{\Delta f}) \circ H \\
 H \circ \tg &= (g_{\tilde{\p}} + \widetilde{\Delta g}) \circ H \end{aligned}
 \end{equation}
with   
\begin{align*}
\Delta S& \leq C_0 N^\sigma \Delta_0 \\
\|h\|_{lip(I),r+1}&\le  C_r S N^{\si}  \Delta_r +C_rSN^{\si}\Delta_0\Delta_r  \\
\td_r &\le  
C_r S N^{\si}\Delta_0\Delta_r +  C_{r,r'} N^{\sigma+ r-r'}\Delta_{r'}
\end{align*}
where:  

\begin{align*} 
S&=\max(\|\f \|_{lip(I)},\|\p \|_{lip(I)} ) \\
\Delta S&=\max(\| \f-\tilde{\f} \|_{lip(I)},\| \p-\tilde{\p} \|_{lip(I)}) \\
\Delta_r &=\max(  \|\Delta f\|_{lip(I),r},\|\Delta g\|_{lip(I),r} ) \\ 
\td_r &=\max(  \|\widetilde{\Delta f}\|_{lip(I),r},\|\widetilde{ \Delta g} \|_{lip(I),r} ) 
\end{align*}

\end{prop}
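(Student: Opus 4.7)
The plan is to carry out a single Newton-type KAM step: construct a near-identity conjugacy $H=\id+h$, corrected frequencies $(\tilde\varphi,\tilde\psi)$, and a residual error $(\widetilde{\Delta f},\widetilde{\Delta g})$ that is essentially quadratic in the current error up to a Fourier-truncation tail. Because $f_\varphi,g_\psi$ are affine with linear parts $\diag(A,1)$ and $\diag(B,1)$, expanding $H\circ \tf=(f_{\tilde\varphi}+\widetilde{\Delta f})\circ H$ to first order in $h$ and $\Delta f$ gives the linearized cohomological system
\begin{align*}
\diag(A,1)\,h - h\circ f_\varphi &= \Delta f - (0,\tilde\varphi-\varphi), \\
\diag(B,1)\,h - h\circ g_\psi &= \Delta g - (0,\tilde\psi-\psi).
\end{align*}
In the splitting $h=(h^1,h^2)\in\R^d\times\R$ this decouples into a twisted higher-rank cohomological system for $h^1$ and a twisted scalar cohomological system for $h^2$. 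The constants $\tilde\varphi-\varphi$ and $\tilde\psi-\psi$ are then fixed to annihilate the $(0,0)$ Fourier coefficients of the $h^2$-equations, yielding the bound on $\Delta S$.

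To solve the linearized system I would smoothly truncate to Fourier modes $|k|,|m|\le N$, the tail contributing the $C_{r,r'}N^{\sigma+r-r'}\Delta_{r'}$ term in the final estimate on $\td_r$. At a truncated mode $(k,m)$ the unknown $\hat h(k,m)$ couples to $\hat h(A^{-T}k,m)$ and $\hat h(B^{-T}k,m)$ through the Fourier multipliers $e^{i2\pi m\varphi}$ and $e^{i2\pi m\psi}$. Projecting the $h^1$-equations onto a common generalized eigenspace $V_{\lambda,\mu}$ of the commuting pair $(A,B)$ transforms the coupling constants into $\lambda-e^{i2\pi m\varphi}$ for the $A$-equation and $\mu-e^{i2\pi m\psi}$ for the $B$-equation; the hypothesis $\varphi\in\cD(N,A)$ gives the lower bound $N^{-3}$ for the former. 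Iterating along the $A^T$- or $B^T$-orbit of $k$ in the direction dictated by the modulus of the dominant multiplier, and invoking (HR) to ensure that every nonzero Fourier mode is expanded by some element of the group generated by $A^T,B^T$, produces a bounded solution $h$ with polynomial loss $N^{\sigma(A,B)}$. Standard smoothing and interpolation then give $\|h\|_{lip(I),r+1}\lesssim SN^\sigma\Delta_r$, with the factor $S$ arising from differentiating $e^{i2\pi m\varphi(t)}$ in $t$.

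Substituting $H=\id+h$ back into the original nonlinear equation and using the linearized identity just solved leaves $\widetilde{\Delta f}\circ H$ as the sum of the Fourier-truncation remainder and Taylor-type quadratic terms of schematic form $Dh\cdot\Delta f$, $Dh\cdot h$ and $D^2f_\varphi[h,h]$; bounding these with the previous estimate on $h$ produces the quadratic $C_rSN^\sigma\Delta_0\Delta_r$ contribution to the estimate for $\td_r$, and the argument for $\widetilde{\Delta g}$ is identical. The main obstacle will be the tame simultaneous solution of the twisted higher-rank cohomological system for $h^1$: the $A$- and $B$-equations must be solved consistently, their compatibility coming from the linearization of $\tf\,\tg=\tg\,\tf$, and along every iteration orbit one must control the accumulating products of twists $\prod e^{i2\pi m\varphi}\prod e^{i2\pi m\psi}$ so as not to destroy the $N^{-3}$ denominator lower bound. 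This is precisely where (HR) and the Diophantine hypothesis on $\varphi$ combine, producing a twisted analogue of the cohomological lemma of \cite{DK} and fixing the exponent $\sigma=\sigma(A,B)$.
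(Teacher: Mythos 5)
Your overall architecture matches the paper's: linearize the conjugacy equation to get the twisted cohomological system $\bar A h-h\circ f_\varphi=\Delta f$, $\bar B h-h\circ g_\psi=\Delta g$, diagonalize over common eigenspaces of $(A,B)$, truncate in Fourier modes (the tail giving the $C_{r,r'}N^{\sigma+r-r'}\Delta_{r'}$ term), use $\varphi(t)\in\cD(N)$ for the $n=0$ small divisors $\lambda-e^{i2\pi m\varphi(t)}$, and fix $\tilde\varphi-\varphi$, $\tilde\psi-\psi$ by averaging. But there is a genuine gap at the central step. You assert that iterating along the dual orbit of $k$ "produces a bounded solution $h$" of the truncated system, deferring "consistency" to a final remark. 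In fact the single scalar equation $\lambda h-h\circ f_\varphi=v$ is not solvable for general $v$, even truncated: for each nonzero $A^*$-orbit of Fourier modes there is an obstruction $O^A_{n,m}(v_t)=\sum_{k\in\Z}\lambda_{m,t}^{-(k+1)}v'_{A^kn,m,t}$, and only when it vanishes do the forward and backward one-sided sums define the same decaying coefficient. (Also, the choice of which one-sided sum to use is dictated by whether $n$ is largest in the expanding or contracting direction of $A$, not by the modulus of the multiplier; and the role of (HR) is not to make the single equation solvable.) The missing idea is the paper's Claim 2: the commutation relation, written in Fourier modes, forces the obstructions themselves to satisfy a $B$-twisted cohomological equation $\mu O^A_{n,m}(v_t)-e^{2\pi i m\psi(t)}O^A_{Bn,m}(v_t)=O^A_{n,m}(\phi_t)$, so that (by Lemma 4.5 of \cite{DK}, which is where (HR) actually enters) each obstruction equals a convergent one-sided sum of obstructions of the commutator error $\phi$ and is therefore of size $O(N^\sigma S\|\phi\|)$. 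One then subtracts from $T_Nv$ a correction $\tilde v$ supported on orbit representatives $n=n^*$ to kill all obstructions, and solves the corrected equation exactly.

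This omission also makes your error accounting for $\td_r$ incorrect. You attribute the quadratic term $C_rSN^\sigma\Delta_0\Delta_r$ to Taylor remainders of the composition ($Dh\cdot\Delta f$, $D^2f_\varphi[h,h]$ --- the latter vanishes since $f_\varphi$ is affine). Those terms are present (they are the paper's $E_{L,A}$), but the dominant quadratic contribution to the residual of the linear equation is the obstruction correction itself: $v-(\lambda h-h\circ f_\varphi)=R_Nv+\widetilde{T_Nv}$, and $\|\widetilde{T_Nv}\|\lesssim N^\sigma S\|\phi\|\lesssim N^\sigma S\Delta_0\Delta_r$ by the quadratic estimate on $\Phi$. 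Without identifying the obstructions and bounding them via the commutator, you cannot obtain the quadratic structure of $\td_r$ that makes the KAM iteration converge.
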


We will reduce the proof of Proposition \ref{mainprop} to the solution of a set of linear equations in the coordinates of $h$. These equations are solved using Fourier series and part of the solution is obtained with the higher rank techniques as in \cite{DK} while another part is obtained from solving linear equations above a circular rotation and requires  parameter exclusion to insure that the parameters that are kept satisfy adequate arithmetic conditions that allow to control the small divisors that appear.

\subsection{Reduction of the conjugacy step to linear equations}  \label{redconj}

By substituting $H=id+h$, the first equation in  \eqref{nonlin} becomes:
\begin{equation}\label{newerror}
\Delta f-(Df_{\tilde\f}h-h\circ f_\f)=f_{\tilde\f}-f_\f+\widetilde{\Delta f}(id+h)+E_{L, A}
\end{equation}
where $E_{L, A}=f_{\tilde\f}(Id+h)-f_{\tilde\f}-Df_{\tilde\f}h-h(f_\f+\Delta f)+h f_\f$
The map $Df_{\tilde\f}$ actually does not depend on $\tilde\f$, in fact it is the map $\bar A=(A, Id)$, where $A$ acts on $\mathbb R^d$ and $Id$ acts on $\mathbb R$. The second equation in  \eqref{nonlin} is linearized in the same way, so the linearization of \eqref{nonlin} is the system of  equations in $h$:
\begin{equation}\label{linconj}
\begin{aligned}
\bar Ah-h\circ f_\f&=\Delta f\\
\bar Bh-h\circ g_\p&=\Delta g
\end{aligned}
\end{equation}
where  $\bar B=(B, Id)$ and 
$E_{L, B}:=g_{\tilde\p}(Id+h)-g_{\tilde\p}-Dg_{\tilde\p}h-h(g_\p+\Delta g)+h g_\p$.

 Given a pair of commuting automorphisms $\bar A$ and $\bar B$ we call $(\lambda,\mu)$ a pair of eigenvalues of  $(\bar A,\bar B)$ if $\lambda$ and $\mu$ are eigenvalues of $\bar A$ and $\bar B$ for the same eigenvector. 

If $A$ and $B$ are semisimple, then by 
choosing a proper basis in $\R^d$ in which $A$ and $B$ simultaneously diagonalize, the system \eqref{linconj} breaks down into several systems of the following form  
\begin{equation}\label{simpleconj}
 \begin{aligned}
\la h-h\circ f_\f&=v\\
\mu h-h\circ g_\p&=w
\end{aligned}
\end{equation}
where   $\la$ and $\mu$ are a pair of eigenvalues of $A \times {\rm Id}$ and $B \times {\rm Id}$ and  $v, w \in C^{lip,\infty} (I\times \T^{d+1}, \mathbb R)$.  If $A$ and $B$ have non-trivial Jordan blocks, then instead of \eqref{simplecom}, for each Jordan block we would get a system of equations.  Lemma 4.4   in \cite{DK} shows that this system of equations  can be solved inductively in finitely many steps (the number of steps equals the size of a Jordan block), starting from equation of the form \eqref{simplecom}.  We will not repeat the argument here, instead we assume throughout that $A$ and $B$ are semisimple and we refer to  Lemma 4.4   in \cite{DK} for the general case.

\subsection{Reduction of the  commutativity relation}

Since $f_\f$ and $g_\p$ commute and are linear, the equation  $(f_\f+\Delta f)\circ (g_\p+\Delta g)=(g_\p+\Delta g)\circ (f_\f+\Delta f)$ reduces to:
$$\bar A\Delta g-\Delta g(f_\f+\Delta f)=\bar B\Delta f-\Delta f(g_\p+\Delta g)$$
If we push the terms linear in $\Delta f$ and $\Delta g$ to the left and all the non-linear terms to the right hand side we obtain
\begin{equation}\label{lincom}
\bar A\Delta g-\Delta g\circ f_\f-\bar B\Delta f-\Delta f\circ g_\p=\Phi
\end{equation}
where
\begin{equation}\label{Phi}
\Phi=\Delta g(f_\f+\Delta f)-\Delta g\circ f_\f-(\Delta f(g_\p+\Delta g)-\Delta f\circ g_\p).
\end{equation}


Similarily to section \ref{redconj}, if $A$ and $B$ are semisimple, the equation  \eqref{lincom} reduce to several equations  of the  form:
 \begin{align}\label{simplecom}
(\la w-w\circ f_\f)-(\mu v-v\circ g_\p)=\phi
\end{align} 


\subsection{An approximate solution to \eqref{simpleconj}}


The main result in this note is that the system of linear equations \ref{simpleconj} can be solved up to an error term that is controlled by $\Phi$ which is  quadratically small  in the perturbation terms $\Delta f, \Delta g$.  

\begin{lemm}\label{splittinglemma} For $v, w, \phi \in C^{lip, \infty}(I\times \T^{d+1}, \R)$ satisfying \eqref{simplecom}, and $\la\ne 1, \mu\ne 1$, if $N \in \N$ and  $I$ is an interval such that $I \subset \{t \in I_0  \ / \ \varphi(t) \in \cD(N)\}$, then there  exists $h \in C^{lip, \infty}(I\times \T^{d+1}, \R)$ such that: 
\begin{equation*}\label{split}
\begin{aligned}
&\|h\|_{lip(I), r+1}\le  C_rSN^{\si}\|v\| _{lip(I),r}+C_rSN^{\si}\|\phi\|_{lip(I), r-2}\\
&\|v-(\la h-h\circ f_\f)\|_{lip(I), r}\le  C_{r,r'}N^{d+ r-r'}\|v\|_{lip(I), r'}+C_rSN^{\si}\|\phi\|_{lip(I), r-2}\\
&\|w-(\mu h-h\circ g_\p)\|_{lip(I), r}\le  C_{r, r'}N^{d+r'-r}\|w\|_{lip(I), r'}+ C_rSN^{\si}\|\phi\|_{lip(I), r-2}
\end{aligned}
\end{equation*}
for all $r'>r\ge 0$ and $\si=\si(A, B, \la, \mu, d)$.
The same holds true for $(\la, \mu)=(1,1)$ provided $v, w\in C_0^{lip, \infty}(I\times \T^{d+1}, \R)$.
 \end{lemm}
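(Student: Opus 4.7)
The plan is to pass to Fourier coefficients on $\T^{d+1}$ so that \eqref{simpleconj} becomes a family of scalar equations indexed by $(n, m) \in \Z^d \times \Z$, solve each of them either by direct division (when $n = 0$) or by orbit summation on the dual lattice (when $n \ne 0$), and track the resulting small-divisor losses. Writing $A^*, B^*$ for the dual actions on $\Z^d$ (transpose inverses) and $\hat h, \hat v, \hat w, \hat\phi$ for the Fourier coefficients, \eqref{simpleconj} becomes, for each $(n, m)$,
\begin{align*}
\lambda\, \hat h(n, m, t) - e^{2\pi i m \varphi(t)} \hat h(A^* n, m, t) &= \hat v(n, m, t),\\
\mu\, \hat h(n, m, t) - e^{2\pi i m \psi(t)} \hat h(B^* n, m, t) &= \hat w(n, m, t),
\end{align*}
and the Fourier version of \eqref{simplecom} furnishes the compatibility of $\hat v, \hat w$ modulo $\hat\phi$. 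The first move is to truncate $v, w, \phi$ to Fourier modes with $|n|, |m| \le N$; the standard tame estimate accounts for the discarded high-mode part as the $C_{r,r'} N^{d+r-r'}\|v\|_{r'}$ term in the residual error.

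For modes with $n = 0$ the two equations decouple: $(\lambda - e^{2\pi i m\varphi(t)})\hat h(0, m, t) = \hat v(0, m, t)$, and analogously for $(\mu, \psi, \hat w)$. The hypothesis $\varphi(t) \in \mathcal D(N)$ yields $|\lambda - e^{2\pi i m\varphi(t)}| \ge N^{-3}$ for $0 < |m| \le N$, so division gives $\hat h(0, m, t)$ at the cost of a polynomial factor $N^\sigma$. The two determinations of $\hat h(0, m, t)$ produced by the two equations differ by $\hat\phi(0, m, t)$ divided by the same small divisor, which is precisely the $N^\sigma\|\phi\|_{r-2}$ contribution to the second residual estimate. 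When $\lambda = \mu = 1$ the mode $(n, m) = (0, 0)$ becomes an algebraic obstruction killed by the $C_0$ normalization on $v, w$; the remaining modes $(0, m)$ with $m \ne 0$ are handled identically, using $1 \in \mathcal E(A)$.

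For modes with $n \ne 0$ the equations couple along the dual orbits $\{(A^*)^k n\}_{k \in \Z}$ and $\{(B^*)^k n\}_{k \in \Z}$, and the higher rank hypothesis (HR) on $(A,B)$ is essential: any pair of eigenvalues $(\lambda, \mu)$ coming from a common eigenvector satisfies $(|\lambda|, |\mu|) \ne (1,1)$. In the case $|\lambda| > 1$, set
\[
\hat h(n, m, t) := \sum_{j \ge 0} \lambda^{-j-1} e^{2\pi i j m \varphi(t)}\, \hat v((A^*)^j n, m, t);
\]
the geometric factor $\lambda^{-j-1}$ makes the series absolutely convergent, and a direct telescoping check shows it solves the $A$-equation exactly. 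Substituting into the $B$-equation and invoking the Fourier form of \eqref{simplecom} term by term (together with $A^* B^* = B^* A^*$), the $\hat v$- and $\hat w$-contributions telescope and what remains is
\[
\mu \hat h(n, m) - e^{2\pi i m\psi(t)} \hat h(B^* n, m) - \hat w(n, m) \;=\; -\sum_{j \ge 0} \lambda^{-j-1} e^{2\pi i j m \varphi(t)}\, \hat\phi((A^*)^j n, m, t),
\]
that is, a residual of $\hat\phi$-coefficients propagated along the orbit. This is the mechanism at the heart of the higher-rank splitting in \cite{DK}, adapted to the extra phases $e^{2\pi i m\varphi(t)}$ coming from the elliptic factor. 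When $|\lambda| < 1$ one uses the analogous backward summation in $j$; when $|\lambda| = 1$, (HR) forces $|\mu| \ne 1$ and one swaps the roles of $A$ and $B$.

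The main obstacle is the Lipschitz-in-$t$ bookkeeping through these orbit summations. Each $t$-derivative can fall on the phase $e^{2\pi i j m \varphi(t)}$ (bringing down a factor $\le j m \le N^2$), on a small divisor $\lambda - e^{2\pi i m\varphi(t)}$ (costing $N^3$ via the quotient rule, controlled by $\inf_t \varphi'(t) \ge 2/M$), or on the Fourier coefficient $\hat v((A^*)^j n, m, t)$ itself. Each loss is polynomial in $N$; together with the volume $\sim N^{d+1}$ of surviving Fourier modes and the higher-rank losses inherited from \cite{DK}, they all absorb into a single polynomial exponent $\sigma = \sigma(A, B, \lambda, \mu, d)$, yielding the stated bounds on $\|h\|_{r+1}$ and on the two residuals.
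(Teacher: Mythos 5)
Your overall architecture (Fourier reduction, truncation with the tame remainder, division by $\la-e^{2\pi i m\vp(t)}$ for the $n=0$ modes using $\vp(t)\in\cD(N)$, and pushing a $\phi$-sized discrepancy into the residuals) matches the paper up to the point where $n\ne 0$. But at that point there is a genuine gap. You define $h_{n,m,t}$ by the \emph{forward} orbit sum whenever $|\la|>1$, arguing that the geometric factor $\la^{-j-1}$ gives convergence. Convergence of the series is not the issue; the issue is the decay of $h_{n,m,t}$ in $\|(n,m)\|$, without which $h$ is not smooth and the tame bound $\|h\|_{lip(I),r+1}\le C_rSN^\si\|v\|_{lip(I),r}+\dots$ fails. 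If $n$ lies mostly in the contracting direction of the dual map, then $\|(A^*)^jn\|$ first decreases (down to size $O(1)$ after $j_0\sim\log\|n\|$ steps by Katznelson's lemma), and the term $j=j_0$ of your sum contributes only $|\la|^{-j_0}\|v\|_{r'}\sim\|n\|^{-\log|\la|/\log\rho}\|v\|_{r'}$ — a \emph{fixed} polynomial decay rate independent of $r'$. So your $h$ is at best finitely differentiable. The correct choice of forward versus backward summation is dictated by whether $n$ is largest in the expanding or contracting direction for $A$ (the factor $|\la|^{-k}$, even when it grows, is absorbed by the exponential growth $\rho^{kr}$ of the dual orbit once $r$ is large enough); this is why the paper's Claim~1 works uniformly for all $\la$, including $|\la|=1$, and why your fallback of ``swapping $A$ and $B$'' when $|\la|=1$ does not resolve anything — the same smoothness problem recurs for $B$.

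Once the summation direction depends on $n$, the two one-sided sums disagree by the obstruction $O^A_{n,m}(v_t)=\sum_{k\in\Z}\la_{m,t}^{-(k+1)}v'_{A^kn,m,t}$, which is generically nonzero, so $v$ is \emph{not} an exact twisted coboundary and cannot be solved exactly as you propose. The entire second half of the paper's proof (Claim~2) is devoted to this: the commutation relation \eqref{simplecom} shows that the obstructions themselves satisfy a twisted equation over $B$, whence $O^A_{n,m}(v_t)$ is a one-sided sum of $O^A_{B^ln,m}(\phi_t)$; the higher-rank growth of the two-parameter dual orbits $\{B^lA^kn\}$ (Lemma~4.5 of \cite{DK}, applied at a well-chosen base point $n^*$ on each $A$-orbit) then bounds the correction $\tilde v$ by $C_rSN^\si\|\phi\|_{lip(I),r+\si}$. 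One subtracts $\tilde v$ from $T_Nv$, solves exactly, and the subtracted piece is precisely the $\phi$-sized contribution to the residuals. Your proposal contains no trace of this obstruction/correction mechanism, which is the heart of the lemma; relatedly, your assertion that (HR) forces $(|\la|,|\mu|)\ne(1,1)$ on every common eigenvector is not what is needed nor what the paper uses — (HR) enters only through the joint exponential growth of $\|B^lA^kn\|$.
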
 
 

 \subsection{Proof of Proposition \ref{mainprop}}
 Before we give the proof of Lemma \ref{splittinglemma}, we show how it implies Proposition \ref{mainprop}.

By applying Proposition \ref{annexe.compose} from the Appendix to the equation \eqref{Phi}, we have that 
\begin{equation} \|\Phi\|_{lip(I), r-2}\le C_r \Delta_0\Delta_r \label{quad}
 \end{equation}


Since $\Delta f,\Delta g \in C_0^{lip, \infty}(I\times \T^{d+1}, \R^{d+1})$ we can apply Lemma \ref{splittinglemma} to all the coordinates in (\ref{linconj}) and get $h$ such that 
\begin{equation*}\label{finalHest}
\begin{aligned}
&\|h\|_{lip(I), r+1}\le  C_rSN^{\si}\Delta_r+C_rSN^{\si}\Delta_0\Delta_r \\
&\|\Delta f-(\bar A h-h\circ f_\f)\|_{lip(I), r}\le C_rSN^{\si}\Delta_0\Delta_r+  C_{r,r'}N^{d+ r-r'}\Delta_{r'}\\
&\|\Delta g-(\bar B h-h\circ g_\p)\|_{lip(I), r}\le  C_rSN^{\si}\Delta_0\Delta_r+ C_{r, r'}N^{d+r'-r}\Delta_{r'}
\end{aligned}
\end{equation*}
where the new constant $\si$ is $d$ times the constant $\sigma$ from Lemma \ref{splittinglemma}. For the bound on $h$ we use Lemma  \ref{splittinglemma} and \eqref{quad} with $r'=r$.
In light of \eqref{newerror}, we take
\begin{equation}\label{tildes}
\begin{aligned}
\tilde\f&:= \f +Ave(E_{L,A}^2 \circ (\text{Id} + h)^{-1}) \\
\tilde\p&:= \p+Ave(E_{L,B}^2 \circ (\text{Id} + h)^{-1}) 
\end{aligned}
\end{equation}
and let 
$$\widetilde{\Delta f}=\left(( \Delta f_0-(A h-h\circ f_\f)) -E_{L, A})\right) \circ  (\text{Id} + h)^{-1} +f_\f-f_{\tilde \f}  $$
with a similar definition for $\widetilde{\Delta g}$.
 
\noindent {\it Claim.} We have that $\tilde \f, \tilde \p$, $h$ and $\widetilde{\Delta f}, \widetilde{\Delta g}$ satisfy the conclusion of Proposition \ref{mainprop}.
 

\carre

The rest of Section \ref{section.inductive} is devoted to the proof of Lemma  \ref{splittinglemma}.

 \subsection{Proof of Lemma \ref{splittinglemma}}
 
We first describe obstructions for solving a single coboundary equation in \eqref{simpleconj}. For a fixed $t\in I$ the first equation in \eqref{simpleconj} becomes:
\begin{equation}\label{coboundary}
\la h_t-h_t\circ f_{\f(t)}=v_t,
\end{equation}
where $h_t=h(t, \cdot)$ and similarily for $v$ and $w$.
By reducing to Fourier coefficients, for every $(n, m)\in \ZZ^d\times \ZZ$ we have:
\begin{equation*}
\la \sum_{(n, m)} h_{n, m, t}\chi_{n,m}(x,\theta)-\sum h_{n, m,t}\chi_{n,m}(Ax,\theta+\vp(t))=\sum v_{n, m, t} \chi_{n,m}(x,\theta)
\end{equation*}
\begin{equation*}
 \sum_{(n, m)} (\la h_{n, m, t}-h_{A^*n, m, t}e^{2\pi i m\vp(t)}\chi_{n,m}(x,\theta))=\sum v_{n, m,t} \chi_{n,m}(x,\theta),
\end{equation*}
where $h_{n, m, t}$ denotes  the $(n, m)$-Fourier coefficient of the function $h_t$, $\chi_{n,m}(x,\theta)=e^{2\pi i (n\cdot x+m\t)}$, and $A^*=(A^t)^{-1}$.
Thus for every $(n, m)$
\begin{equation*}
\la h_{n, m,t}- h_{A^*n, m,t}e^{2\pi i m\vp(t)}= v_{n, m,t}.
\end{equation*}
By denoting: $\la_{m,t}:=e^{-2\pi i m\vp(t)}\la$ and $ v'_{n, m,t}:=e^{-2\pi i m\vp(t)}v_{n, m,t}$, we have
\begin{equation}\label{oneeq}
\la_{m, t} h_{n, m,t}- h_{A^*n, m,t}= v'_{n, m,t}.
\end{equation}

For a fixed $m$ and $n\ne 0$ and for a fixed $t$ the equation \eqref{oneeq} has been discussed in \cite{DK}; the obstructions are precisely defined as well as the construction which allows for removal of all the obstructions (Lemma 4.5 in \cite{DK}). The obstructions are:

\begin{equation}\label{obs}
O_{n, m}^A(v_t)=\sum_{k\in \ZZ} \la_{m,t}^{-(k+1)} v'_{A^kn, m,t},
\end{equation}
where we abuse notation a bit by using $A^k$ to denote the $k$-th iterate of the dual map $A^*$. 
The proof of  Lemma \ref{splittinglemma} relies on two claims. In the first one we solve a system of the type  \eqref{simpleconj} provided a set of obstructions computed with the right hand side vanish. In the second claim, we show how the commutation relation  allows to modify the right hand side in  \eqref{simpleconj}  to set the obtructions to $0$. Moreover, the modification is of the order of the "commutation error" $\Phi$ in \eqref{lincom}.

\bigskip

\noindent {\bf Claim 1.} {\it Let $v$ be in $C^{lip(I), \infty}(I,\T^{d+1}, \RR)$ such that for all $t\in I$ and $|m|>N$, $v_{0, m,t}=0$. If for all $n, m,$ and  $t\in I$, $n\neq 0$, $O_{n, m}^A(v_t)=0$, and $ave(v_t)=0$ in the case $\la=1$, then there exists a  solution $h$ to the equation $\la h-h\circ f_\f =v$ in $C^{lip(I), \infty}(I\times\T^{d+1}, \RR)$ satisfying  
\begin{equation}\label{h-est}
\|h\|_{lip(I), r}\le C_rSN^3\|v\|_{lip(I), r+\si}
\end{equation}
for all $r\ge 0$, where $\sigma=\sigma\{\lambda, d, A\}$. Moreover, if $h$ and $v$ are smooth maps with $h_{0, m,t}=v_{0, m,t}=0$  for $|m|>N$ and with averages zero, such that $\la h-h\circ f_\f =v$ on $I\times \T^{d+1}$, then $h$ satisfies the estimate \eqref{h-est}.}

\bigskip

{\it Proof of the Claim 1}. The proof is similar to the proof of  the Lemma 4.2 in \cite{DK}, except that here one extra (isometric) direction causes somewhat greater loss of regularity. 

Solution $h$ is defined via its Fourier coefficients $h_{n, m,t}$, each of which can be defined, in case $n\ne 0$, by using one of the two forms: 


\begin{equation}\label{h-def}
h_{n, m,t}=\sum_{k=0}^\infty \la_{m,t}^{-(k+1)} v'_{A^kn, m,t}=-\sum_{k=-\infty}^{-1} \la_{m,t}^{-(k+1)} v'_{A^kn, m,t}.
\end{equation}

One can use one or the other form to obtain an estimate for the size of $h_{n, m,t}$ depending on whether a non-trivial $n$ is largest in the expanding or in the contracting direction for $A$. This is completely the same as in \cite{DK} and it automatically gives an estimate of the size of $h_{n, m,t}$ with respect to the norm of $n$. In order to obtain here the full estimate for the $C^r$ norm of $h$ we need to estimate the size of $h_{n, m,t}$ with respect to the norm of $(n, m)$ and this is the only additional detail needed here. But this is not a big problem: since $n$ is non-trivial, after approximately $\log m$ iterations of $n$ by $A$, the resulting vector will surely be larger than $m$. 
We have:

\begin{equation}\label{h-est-1}
\begin{aligned}
|h_{n, m,t}|&\le \sum_{k=0}^{\infty}|\la_{m,t}^{-(k+1)}||v'_{A^kn, m,t}|\\&=\sum_{k=0}^{\infty}|\la|^{-(k+1)}|v_{A^kn, m,t}|\\&\le \|v\|_{0(I),r}\sum_{k=0}^{\infty}|\la|^{-(k+1)}\|(A^kn, m)\|^{-r}.
\end{aligned}
\end{equation}
Take the norm in $\ZZ^N\times \ZZ$ to be $\|(n, m)\|=\max\{\|n\|, |m|\}$, where for $n\in \ZZ^N$, $\|n\|$ is the maximum of euclidean norms of projections of $n$ to expanding, contracting and the neutral directions for $A$. Let $n_{exp}$ denote the projection of $n$ to the expanding subspace for $A$. Due to ergodicity of $A$ this projection is non-trivial. For example we say that $n$ is largest in the expanding if $\|n_{exp}\|\ge C\|n\|$ where $C$ is a fixed constant ($C=1/3$ works). Similarily, we say that $n$ is largest in the contracting (resp. neutral) direction if the projection of $n$ to the contracting (resp. neutral) direction is greater than constant times the norm of $n$. 


Then if $\rho$ denotes the expansion rate for $A$ in the expanding direction for $A$,  we have by the Katznelson Lemma (See for example Lemma 4.1 in \cite{DK}): 
\begin{equation*}
\begin{aligned}
\|(A^kn, m)\|&\ge \max\{\|A^kn_{exp}\|, |m|\}\ge \max\{\rho^k\|n_{exp}\|, |m|\}\\
&\ge \max\{C\rho^k\|n\|^{-d}, |m|\}\ge \max\{C\rho^{k-k_0}\rho^{k_0}\|n\|^{-d}, |m|\} 
\end{aligned}
\end{equation*}

 Since $\rho^{k}\|n\|^{-d}\ge \|(n, m)\|$ for all $k\ge \frac{d+1}{\ln \rho}\ln \|(n, m)\|$, we have: 
$$ \|(A^kn, m)\|\ge C\rho^{k-k_0}\|(n, m)\|$$
for all $k> k_0=[\frac{d+1}{\ln \rho}\ln \|(n, m)\|]$.

Now if $n$ is largest in the expanding direction for $A$ then for $0\le k\le k_0$:   $\|(A^kn, m)\|\ge C\|(n, m)\|$. If $n$ is largest in the neutral direction for $A$, then for $0\le k\le k_0$:   $\|(A^kn, m)\|\ge C(1+k)^{-d}\|(n, m)\|$. 
 
Thus for all $t\in I$ (in the worst case scenario, when $|\la|<1$):

\begin{equation*}\label{h-est-11}
\begin{aligned}
|h_{n, m,t}|&\le \|v\|_{0(I),r}(\sum_{k=0}^{k_0}|\la|^{-(k+1)}\|(n, m)\|^{-r}+\sum_{k=k_0}^{\infty}|\la|^{-(k+1)}(C\rho^k\|(n, m)\|)^{-r})\\
&\le \|v\|_{0(I),r}(k_0|\la|^{-(k_0+1)}\|(n, m)\|^{-r}+|\la|^{k_0}(C\rho^{k_0}\|(n, m)\|)^{-r}\sum_{k=0}^\infty |\la|^{-k}\rho^{-kr}
\\
&\le C_r\|v\|_{0(I),r}(\|(n, m)\|^{\frac{d+1}{ln\rho}}\log \|(n, m)\|)\|(n, m)\|^{-r}+(C\rho^{k_0}\|(n, m)\|)^{-r}) \\
&\le C_r\|v\|_{0(I),r}\|(n, m)\|^{-r+\si} 
\end{aligned}
\end{equation*}
where $\si= 2+d+ a+\delta$, $\delta>0$, and $a=a(\la)=\frac{d+1}{ln\rho}>0$ in general depends only on the eigenvalues of $A$. 
Note that for the convergence of the sum $\sum_{k=0}^\infty |\la|^{-k}\rho^{-kr}$ it suffices to assume that the regularity $r$ of $v$ is greater than a constant $-\frac{\ln|\la|}{\ln\rho}$, which in general depends on eigenvalues of $A$. We recall that the norm $\|v\|_{0(I),r}$ denotes the supremum of the usual $C^r$ norms of $v(t)$ as $t\in I$.


When $n$ is largest in the contracting direction for $A$ then just as in \cite{DK} we repeat the above estimates using the  expression $h_{n, m,t}=-\sum_{k=-\infty}^{-1} \la_{m,t}^{-(k+1)} v'_{A^kn, m,t}
$ for the coefficients $h_{n, m,t}$ instead to obtain the same bound: $|h_{n, m,t}|\le  C_r\|v\|_{0(I),r}\|(n, m)\|^{-r+\si}$, where $\si$ is now slightly different (changed by a constant) to include the eigenvalues for $A$ in the contracting directions. 


Now in the case $n=0$, and any non-zero $m$ the equation \eqref{oneeq} implies:

\begin{equation*}
\la h_{0, m,t}-h_{0, m,t}e^{2\pi i m\vp(t)}=v_{0, m,t},
\end{equation*}
so in this case 
\begin{equation}\label{h-0}
h_{0,m,t}=\frac{v_{0, m,t}}{\la-e^{2\pi i m\vp(t)}},
\end{equation}
and thus for $|\la|\ne 1$ we have that for all $t\in I$:
$$|h_{0,m,t}|\le (|\la|-1)^{-1} \|v\|_{0(I),r}|m|^{-r}.$$
In the case $|\la|=1$ 
 this is a small divisor problem. 
 When $t\in I$ we have $\varphi(t)\in \cD(N)$ and thus for $|m|\le N$ we have:
$$|h_{0,m,t}|\le N^3 |v_{0,m,t}|\le \|v\|_{0(I),r}N^3|m|^{-r}$$
Since for $|m|>N$, $v_{0,m}=0$, we define $h_{0, m}=0$ for $|m|>N$.

Accumulating all the estimates, we have for all $t\in I$:
$$|h_{n,m,t}|\le C_r \|v\|_{0(I),r}N^3\|(n, m)\|^{-r+\si}.$$

Thus the function $h$ defined via its Fourier coefficients $h_{n, m,t}$ satisfies the equation $\la h-h\circ f_\p =v$ and the estimate:
\begin{equation}\label{h-total-0}
\|h\|_{0(I),r}\le C_rN^{3}\|v\|_{0(I),r+\si},
\end{equation}
for all $r>r_0$, where $\si$ is a fixed constant, $\si=d+2+\max\{|\la|, |\la|^{-1}\}$, which in our set-up  depends only on the eigenvalues of $A$ and the dimension of the torus.

We estimate now $h$ in the direction of the parameter $t$. First we can characterize $x\in C^{lip, \infty}(I, \T^{d+1},\RR)$ by a property of Fourier coefficients of $x$. Let $\Delta x:=x_t-x_{t'}$, and similarly $\Delta x_{n, m}=x_{n, m, t}-x_{n, m, t'}$. Namely, $x\in C^{lip, s}(I, \T^{d+1},\RR)$ implies not only that that $x_{n, m, t}$ decay faster than $\|(n, m)\|^{-s}$ but also  from
$\|\Delta x^{(s)}\|_0\le L_s|t-t'|$ we get that $|\Delta x_{n, m}|\le C_s\|(n, m)\|^{-s}|t-t'|$ for some constant $C_s$. It is then easy to check that $|\Delta x_{n, m}|\le C_s\|(n, m)\|^{-s-d-1} |t-t'|$ suffices for  $x\in C^{lip, s}(I, \T^{d+1},\RR)$.

By using \eqref{h-def} (denote for simplicity by $\Sigma^{\pm}$ positive or negative sum in \eqref{h-def}) we have for $n\ne 0$:
\begin{equation*}
\begin{aligned}
&|\Delta h_{n, m}|=|\Sigma^{\pm}\la^{-(k+1)}(e^{2\pi i km \vp(t)}v_{A^kn, m, t}-e^{2\pi i km \vp(t')}v_{A^kn, m, t'})|\\
&=|\Sigma^{\pm}\la^{-(k+1)}((e^{2\pi i km \vp(t)}-e^{2\pi i km \vp(t')})v_{A^kn, m, t} +e^{2\pi i km \vp(t')}\Delta v_{A^kn, m, t})|\\
&\le (2\pi \|\f\|_{lip(I)}\|v\|_{0(I), r}+\|v\|_{lip(I), r}) |t-t'|\Sigma ^{\pm}|\la|^{-(k+1)}|k|\|(A^kn, m)\|^{-r+1} 
\end{aligned}
\end{equation*}
and from the discussion following \eqref{h-est-1} we have that for every $(n, m)$, $n\ne 0$, either the positive or the negative sum in the last expression above can be bounded by $C_r\|n, m\|^{-r+\sigma+1}$. When $n=0$ from \eqref{h-0} and for $t, t'\in I$ it is clear that $\Delta h_{0, m}\le CN^3\Delta v_{0, m}$. This gives the bound for the  Lipschitz constant for any $r$-th  derivative of $h$ 
which combined with \eqref{h-total-0} implies $\|h\|_{lip(I), r-\si-2-d}\le C_rN^3S\|h\|_{lip(I), r}$.

For the second part of the claim, if $h$ and $v$ are smooth and satisfy $\la h-h\circ f_\f =v$ for $t\in \mathcal I$ then for $n\ne 0$ the obstructions $O_{n, m}^A(v_t)$ are all zero, thus if $v$ satisfies in addition that $v_{0, m,t}=0$ for $|m|>N$ then by the first part of the Claim 1 there exists $h'$ such that $\la h'-h'\circ f_\f =v$ on $I$ and satisfies the estimate \eqref{h-est}. Then for $h"=h-h'$,  $\la h"=h"\circ f_\f$ on $I $, but this implies $h"=0$ in case $\la\ne 1$, or is constant in case $\la=1$. However, by construction $h'$ has average 0, and so does $h$ by assumption, so in any case $h=h'$ on $I$, which implies that $h$ satisfies the estimate \eqref{h-est}. \\
{\it -End of pf of claim 1.-}

Remark. The following fact will be used in the proof of the Claim 2 bellow: For every $n\in \ZZ^d$ there exists a point $n^*$ on the orbit $\{A^kn\}_{k\in\ZZ}$ such that the projection of $n$ to the contracting subspace of  $A$ is larger than the projection to the expanding subspace of $A$ and for $An$ the opposite holds:  projection of $An$ to the contracting subspace of  $A$ is smaller than the projection to the expanding subspace of $A$. For each $n$ choose an $n^*$ on the orbit of $n$ with this property \cite{DK}.

\bigskip

{\bf Claim 2.} {\it  Assume that for all $t\in I$ the following holds:
\begin{equation}\label{simplecom-t}
(\la w_t-w_t\circ f_{\f(t)})-(\mu v_t-v_t\circ g_{\p(t)})=\phi_t
\end{equation}
 and $v_{0, m,t}=w_{0, m,t}=\phi_{0,m,t}=0$ for $|m|>N$. 
Define $\tilde v_t$ by

\begin{equation*}
\tilde v_{n, m,t} = \left\{ \begin{aligned}
   O^A_{n, m}(v_t), \,\, & n\ne 0, n=n^* \\
    0, \,\, & \mbox{otherwise.} \end{aligned}\right. 
    \end{equation*}
Then: 

(1) For $n\ne 0$,  $O_{n, m}^A(v_t-\tilde v_t)=0$.

(2) $\|\tilde v\|_{lip(I), r}\le C_r N^3S\|\phi\|_{lip(I), r+\si}$, where $\si=\si(A, B, \la, \mu, d)$ and $r\ge 0$.
}


\bigskip

{\it Proof of claim 2.}

(1) This is immediate from the definition of $O_{n, m}^A$ and $\tilde v_t$.

(2) In Fourier coefficients \eqref{simplecom-t} becomes:

\begin{equation*}\label{fcoefM}
(\la w_{n, m,t}- w_{An, m,t}e^{2\pi i m\vp(t)})-(\mu v_{n, m,t}- v_{Bn, m,t}e^{2\pi i m\psi(t)})=\phi_{n, m,t}
\end{equation*}
which implies that for non-zero $n$ the obstructions $O_{n, m}^A$ for $$(\mu v_{n, m,t}- v_{Bn, m,t}e^{2\pi i m\psi(t)})+\phi_{n, m,t}$$ are trivial. This implies that $O_{n, m}^A(v_t)$ satisfies the equation:
\begin{equation}\label{obs-eq}
\mu O_{n, m}^A(v_t)- e^{2\pi i m\psi(t)} O_{Bn, m}^A(v_t)= O_{n, m}^A(\phi_t)
\end{equation}
where $ O_{n, m}^A(v_t)$ and $O_{n, m}^A(\phi_t)$ are defined as in \eqref{obs}.
From this, by backward and forward iteration by $B$, one obtains two expressions for $ O_{n, m}^A(v_t)$:
\begin{equation*}\begin{aligned}
 O_{n, m}^A(v_t)&=\sum_{l\ge 0} \mu_{m,t}^{-(l+1)} e^{-2\pi i m\psi(t)}O_{B^ln, m}^A(\phi_t)\\
 &=-\sum_{l< 0} \mu_{m,t}^{-(l+1)} e^{-2\pi i m\psi(t)}O_{B^ln, m}^A(\phi_t),\end{aligned}
 \end{equation*}
where $ \mu_{m,t}:=e^{-2\pi i m\psi(t)}\mu$.

It is proved in Lemma 4.5 in \cite{DK} that if every $A^kB^l$ for $(k, l)\ne (0, 0)$ is ergodic, and if $n=n^*$ then either for $l>0$ for $l<0$, the term  $\|(B^lA^kn, m)\|$ has exponential growth in $(l, k)$ for $\|(l, k)\|$ larger than some $C\log|n|$ and polynomial growth for $\|(l, k)\|$ less than $C\log|n|$. 
Hence, for $n=n^*$, it follows exactly as in Lemma 4.5  \cite{DK},  that either one or the other sum above are comparable to the size of $\|\phi_{t}\|_r\|(n, m)\|^{-r+\sigma}$, where $\si$ is a constant which depends only on $A, B $ and the dimension $d$.
Therefore in case $n\ne 0$ for all $t\in I$
\begin{equation}\label{tildev}
|\tilde v_{n, m,t}|=|O_{n, m}^A(v_t)|\le C_r\|\phi\|_{0(I),r}\|(n, m)\|^{-r+\sigma}.
\end{equation}
This implies the $\|\cdot\|_{0(I), r}$-norm estimate for $\tilde v$.

To obtain the estimate in the $t$ direction, just as in the Claim 1,  we look at $\Delta \tilde v_{n, m}$.  For $n\ne 0, n=n^*$:
\begin{equation*}
\begin{aligned}
&\Delta \tilde v_{n, m}=O_{n, m}^A(v_t-v_t')=\\
&\Sigma_l^{\pm}\Sigma_k \mu^{-(l+1)}\la^{-(k+1)}e^{-2\pi i m (l\psi(t)+k\vp(t))}(\phi_{B^lA^kn, m, t}-\phi_{B^lA^kn, m, t'})\\
&+\Sigma_l^{\pm}\Sigma_k \mu^{-(l+1)}\la^{-(k+1)}(e^{-2\pi i m (l\psi(t)+k\vp(t))}-e^{-2\pi i m (l\psi(t')+k\vp(t'))})\phi_{B^lA^kn, m, t'}.\\
\end{aligned}
\end{equation*}
If $\vp$ and $\psi$ are Lipschitz and $\phi$ is in $C^{lip(I), r}$, we have:
\begin{equation*}
\begin{aligned}
&|\Delta \tilde v_{n, m}|\le 
\|\phi\|_{lip(I), r}|t-t'|\Sigma_l^{\pm}\Sigma_k |\mu|^{-(l+1)}|\la|^{-(k+1)}\|(B^lA^kn, m)\|^{-r}\\
&+2\pi S|t-t'|\|\phi\|_{0(I), r}\Sigma_l^{\pm}\Sigma_k \mu^{-(l+1)}\la^{-(k+1)}|k||l|\|(B^lA^kn, m)\|^{-r+1}\\
\end{aligned}
\end{equation*}
Now the same argument as above (based on  Lemma 4.5  \cite{DK}) implies that for every $n=n^*$ one of the sums (for $l>0$ or $l<0$) \\
$\Sigma_l^{\pm}\Sigma_k \mu^{-(l+1)}\la^{-(k+1)}|k||l|\|(B^lA^kn, m)\|^{-r+1}$ can be bounded by $\|(n, m)\|^{-r+\sigma}$, where $\sigma$ is a constant depending on $A, B, \la, \mu$ and $d$. This implies 
$$|\Delta \tilde v_{n, m}|\le CS\|\phi\|_{lip(I), r}\|(n, m)\|^{-r+\sigma}|t-t'|.$$
Taking into account all the estimates above, this implies:
$$\| \tilde v\|_{lip(I), r}\le C_rN^3S\|\phi\|_{lip(I), r+\si},$$
with $\si$ fixed depending only on $A, B, \la$ and $d$.

{\it -End of proof of claim 2-}

\vspace{0.2in}

Now given $v, w$ such that $(\la w-w\circ f_{\f})-(\mu v-v\circ g_{\p})=\phi$, first truncate $v_t$ to $T_Nv_t$ for all $t\in I$. We choose the same $N$ for all $t\in I$. The truncation and the residue satisfy the following estimates for every $t$ and $r\leq r'$ 
\begin{equation}\label{truncest}
\begin{aligned}
\|T_Nv_t\|_{r'}&\le C_{r,r'} N^{r'-r+d}\|v_t\|_{r}\\
\|R_Nv_t\|_{r}&\le C_{r,r'}N^{r-r'+d}\|v_t\|_{r'}
\end{aligned}
\end{equation}
Since the same truncation is used for all t, it is easy to check that

\begin{equation*}
\begin{aligned}
\|T_Nv\|_{lip(I),r'}&\le C_{r,r'}N^{r'-r+d}\|v\|_{lip(I),r}\\
\|R_Nv\|_{lip(I),r}&\le C_{r,r'}N^{r-r'+d}\|v\|_{lip(I),r'} 
\end{aligned}
\end{equation*}
Now the Claim 2 applies to $T_Nv$. It gives $\widetilde {T_Nv}$ such that for $T_Nv-\widetilde {T_Nv}$ the obstructions $O_{n, m}^A(T_Nv_t-\widetilde {T_Nv_t})$ vanish for $n\ne 0$ and $$\|\widetilde {T_Nv}\|_{lip(I),r}\le C_rN^3S\|T_N\phi\|_{lip(I),r+\si}.$$ 
Notice that $\widetilde{T_Nv_t}$ by construction has all $(0, m,t)$-Fourier coefficients equal to zero for $|m|>N$.
Thus the Claim 1 can be applied to $T_Nv-\widetilde {T_Nv}$. Therefore there exists $h\in \ci(\cA\times \T^{d+1}, \R^{d+1})$ as in Claim 1 such that for all $t\in \cA$: $$T_Nv_t-\widetilde{T_N v_t}=\la h_t-h_t\circ\ft$$ and 

\begin{equation}\label{he}
\begin{aligned}
\|h\|_{lip(I),r+1}&\le C_rN^3S\|T_Nv-\widetilde{T_N v}\|_{lip(I),r+1+\si}\\
&\le C_rN^3S(\|T_Nv\| _{lip(I),r+1+\si}+ C_rN^3\|T_N\phi\|_{lip(I),r+1+2\si})\\
&\le  C_rSN^{4+\si}\|v\| _{lip(I),r}+C_rSN^{6+2\si}\|\phi\|_{lip(I),r-2}.
\end{aligned}
\end{equation}
Also 
\begin{equation*}
\begin{aligned}
\|v-(\la h-h\circ f)\|_{lip(I),r}&= \|R_Nv+\widetilde{T_N v}\|_{lip(I),r}\\
&\le \|R_Nv\|_{lip(I),r}+ C_rSN^3\|T_N\phi\|_{lip(I),r+\si}\\
&\le C_{r,r'}N^{r-r'+d}\|v\|_{lip(I),r'}+C_rSN^{5+\si}\|\phi\|_{lip(I),r-2}
\end{aligned}
\end{equation*}
Now to estimate $w-(\mu h-h\circ g)$ we use:

\begin{equation*}
\begin{aligned}
&(\la w-w\circ f)-(\mu v-v\circ g)=\phi\\
&(\la w-w\circ f)-(\mu T_Nv-T_Nv\circ g)-(\mu R_Nv-R_Nv\circ g)=\phi\\
&(\la w-w\circ f)-(\mu (T_Nv-\widetilde {T_Nv})-(T_Nv-\widetilde{T_N v}) \circ g)\\
&-(\mu \widetilde{T_N v}-\widetilde{T_N v}\circ g)-(\mu R_Nv-R_Nv\circ g)=\phi\\
&(\la w-w\circ f)-(\mu (\la h-h\circ f)-(\la h-h\circ f) \circ g)-(\mu \widetilde{T_N v}-\widetilde{T_N v}\circ g)\\
&-(\mu R_Nv-R_Nv\circ g)=\phi\\
&\la (w-(\mu h-h\circ g))-(w-(\mu h-h\circ g))\circ f=\\
&\phi+(\mu \widetilde{T_N v}-\widetilde{T_N v}\circ g)-(\mu R_Nv-R_Nv\circ g).\\
\end{aligned}
\end{equation*}
This implies:
\begin{equation*}
\begin{aligned}
&\la (T_Nw-(\mu h-h\circ g))-(T_Nw-(\mu h-h\circ g))\circ f=\\
&\phi+(\mu \widetilde{T_Nv}-\widetilde{T_N v}\circ g)-(\mu R_Nv-R_Nv\circ g)-(\mu R_Nw-R_Nw\circ g)=\\
&T_N\phi +(\mu \widetilde{T_Nv}-\widetilde{T_N v}\circ g).
\end{aligned}
\end{equation*}
Since both $T_Nw-(\mu h-h\circ g)$ (by construction of $h$) and 
$T_N\phi +(\mu \widetilde{T_Nv}-\widetilde{T_N v}\circ g)$
(by construction of $\widetilde{T_Nv}$), satisfy that their $(0, m,t)$ Fourier coefficients are zero for $|m|>N$,  the second part of the Claim 1 applies and gives an estimate for $T_Nw-(\mu h-h\circ g)$:
\begin{equation*}
\begin{aligned}
\|T_Nw-(\mu h-h\circ g)\|_{lip(I),r}&\le C_rSN^3\|T_N\phi+(\mu  \widetilde{T_Nv}- \widetilde{T_Nv}\circ g)\|_{lip(I),r+\si}\\
&\le C_rSN^{5+2\si}\|\phi\|_{lip(I),r-2}\\
\end{aligned}
\end{equation*}
Therefore:
\begin{equation*}
\begin{aligned}
\|w-(\mu h-h\circ g)\|_{lip(I), r}&\le C_rSN^{5+2\si} \|\phi\|_{lip(I),r-2}+\|R_Nw\|_{lip(I), r}\\
&\le  C_rSN^{5+2\si}\|\phi\|_{lip(I),r-2}+C_{r, r'}N^{d+r'-r}\|w\|_{r'}
\end{aligned}
\end{equation*}
Finally we can  redefine  the constant $\sigma$ by $\sigma:= 6+2\si$. This completes the estimates in Lemma \ref{splittinglemma}.


\section{The KAM scheme} \label{sec.kam}

\begin{lemm} \label{exclusion} Let $M>0$. There exists $N_0(M)$ such that if $N>N_0$ and $\tilde{N}=N^{3/2}$ and  if $I$ is an interval of size $1\geq |I|\geq 1/(2MN^2)$ and if $M^{-1}<\varphi'(t)<M$ for every $t \in I$, then there exists a union of disjoint intervals $\cU = \{ \tI_j\}$ such that $ \varphi(\tI_j) \in \cD(\tilde{N},A)$ and $\tI_j \subset I$ and $| \tI_j|\geq 1/(2M\tilde{N}^2)$ and $\sum | \tI_j| \geq (1-2dM^2\tilde{N}^{-1})  |I|$.
\end{lemm}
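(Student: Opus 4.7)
The plan is to identify the ``resonant'' subset of $I$ on which $\varphi(t) \notin \cD(\tilde N, A)$, show it has small measure with few connected components, and take the $\tilde I_j$ to be the large components of its complement.

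First, I would estimate the bad set in the $\alpha$-variable. For each $\lambda \in \mathcal E(A)$ and each $k$ with $0 < |k| \leq \tilde N$, writing $\lambda = e^{i2\pi\theta_\lambda}$, the resonance $|\lambda - e^{i2\pi k\alpha}| < \tilde N^{-3}$ is equivalent to $\|k\alpha - \theta_\lambda\|_{\mathbb T} < c\tilde N^{-3}$ for an absolute constant $c$. This is a union of at most $|k|$ equally spaced subintervals of $[0,1]$ each of length $\leq 2c\tilde N^{-3}/|k|$, giving total $\alpha$-measure $\leq 2c\tilde N^{-3}$. Summing over the $\leq 2\tilde N$ relevant $k$ and the $\leq d+1$ eigenvalues, the complement $B_\alpha$ of $\cD(\tilde N, A)$ in $[0,1]$ has measure $O(d\tilde N^{-2})$.

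Next, I would transfer the estimate back to the $t$-variable via $\varphi$. The bi-Lipschitz bound $M^{-1}<\varphi'<M$ gives $|B_t|\leq M|B_\alpha| \leq C d M \tilde N^{-2}$ for $B_t := \varphi^{-1}(B_\alpha)\cap I$. An elementary count using $|\varphi(I)|\leq M|I|$ (for each pair $(\lambda,k)$ the number of $B_{\lambda,k}$-components meeting $\varphi(I)$ is at most $|k|\cdot|\varphi(I)|+1 \leq M|k||I|+1$) bounds the number of connected components of $B_t$ by $Cd(M|I|\tilde N^2+\tilde N)$, so $I\setminus B_t$ is a finite disjoint union of open intervals.

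Finally, I would declare $\{\tilde I_j\}$ to be those components of $I\setminus B_t$ of length at least $1/(2M\tilde N^2)$. By construction $\tilde I_j \subset I$, $|\tilde I_j|\geq 1/(2M\tilde N^2)$, and $\varphi(\tilde I_j)\subset \cD(\tilde N,A)$. The loss decomposes as
\[
|I|-\textstyle\sum_j|\tilde I_j|\;=\;|B_t|\;+\;\text{(total length of discarded short components)},
\]
and the second piece is at most (number of components of $B_t$) $\cdot 1/(2M\tilde N^2)$. Feeding in the bounds above, using $\tilde N = N^{3/2}$ and $|I|\geq 1/(2MN^2)$, and choosing $N_0(M)$ large enough so that the leading terms in $M$ and $d$ are absorbed, one obtains the inequality $\sum_j|\tilde I_j| \geq (1-2dM^2\tilde N^{-1})|I|$. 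The main obstacle is precisely this final bookkeeping: the discarded short components can be numerous, and it is the scale relation $\tilde N=N^{3/2}$ between the old and new levels (which forces $|I|$ to be at least $\tilde N^{2/3}$ times the new minimum scale $1/(2M\tilde N^2)$) that allows their total length to fit inside the allowed slack.
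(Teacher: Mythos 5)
Your strategy (excise the resonance zones, keep the long complementary components) is the right general shape, but the final bookkeeping --- which you yourself flag as the main obstacle --- does not close, and the scale relation $\tilde N=N^{3/2}$ does not rescue it. You bound the discarded mass by (number of components of $B_t$) $\cdot\,1/(2M\tilde N^2)$, and your own count of components is of order $d(M\tilde N^2|I|+\tilde N)$. The product is of order $d|I|+d/(M\tilde N)$: in the first term the $\tilde N^2$ in the count exactly cancels the $\tilde N^{-2}$ in the threshold, leaving a \emph{fixed fraction} of $|I|$, and the second term is of order $d\tilde N^{1/3}|I|$ when $|I|$ is near its minimal allowed size $1/(2MN^2)=1/(2M\tilde N^{4/3})$. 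Neither is anywhere near the required loss $2dM^2\tilde N^{-1}|I|$, so the argument as written proves nothing. (A secondary issue: your measure bound $|B_t|\le M|B_\alpha|=O(dM\tilde N^{-2})$ uses the global measure of the bad set on the circle; at the minimal $|I|$ this also exceeds $2dM^2\tilde N^{-1}|I|$, and must be replaced by the localized count over $\varphi(I)$ that you already use for components.)

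The missing ingredient, and the heart of the paper's proof, is a \emph{separation property} of the resonances. Each resonance is an exact solution $t_k$ of $e^{i2\pi k\varphi(t)}=\lambda$ for some unit-modulus $\lambda\in\mathcal E(A)$ and $0<|k|\le\tilde N$; the corresponding values $\varphi(t_k)$ have the form $(j+\theta_\lambda)/k$, and distinct such values are pairwise separated by at least $\tilde N^{-2}$ (for $\lambda=1$ this is just $|j/k-j'/k'|\ge 1/(kk')$), so consecutive points $t_k$ are separated by at least $1/(M\tilde N^2)$. Since excising only $[t_k-M\tilde N^{-3},t_k+M\tilde N^{-3}]$ already guarantees $|\lambda-e^{i2\pi k\varphi(t)}|\ge\tilde N^{-3}$ off the excised set, every complementary gap automatically has length at least $1/(M\tilde N^2)-4M\tilde N^{-3}\ge 1/(2M\tilde N^2)$ once $N$ is large: \emph{no} components are discarded at all, and the loss is exactly the measure of the excised set, i.e.\ (number of points) $\times\,O(M\tilde N^{-3})\le 2dM^2\tilde N^{-1}|I|$ plus lower-order boundary terms. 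This is the point of calibrating the new threshold $1/(2M\tilde N^2)$ to the new resonance scale $\tilde N$ --- it sits just below the guaranteed gap between consecutive resonances --- rather than treating it as an arbitrary cutoff below which components are thrown away and crudely counted.
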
 

\begin{proof} 
We just observe that the set of $t_k \in I$ such that $\l+e^{i2\pi \f(t)}=0$ with $\l \in \mathcal E(A)$ and $k \leq \tilde{N}$ consists of at most $d([M \tN^2 |I|]+2)$ points separated one from the other by at least $1/(M \tN^2)$. Excluding from $I$ the intervals $[t_k-M/\tN^3,t_k+M/\tN^3]$ leaves us with a collection of intervals of size greater than $1/(2M\tilde{N}^2)$ of total length $|I|- d([M \tN^2 |I|]+2)M/\tN^3   \geq (1-2dM^2\tilde{N}^{-1})  |I|$.

\end{proof}

Recall that { \begin{equation} \label{M1} \max(\|\f \|_{lip(I_0)},\|\p \|_{lip(I_0)}) \leq \frac{M}{2}, \quad \inf_{t\in I_0}\varphi'(t) \geq \frac{2}{M} \end{equation}}


Let $N_0\geq N_0(M)$ of Lemma \ref{exclusion} and define for $n\geq 1$,  $N_n=N_{n-1}^{\frac{3}{2}}$. 

{ Observe that Lemma \ref{exclusion} implies that if $\cA_n$ is a collection of intervals of sizes greater than $1/(2M N_n^2)$ and $\f_n$ and $\psi_{n}$ are functions 
satisfying (\ref{M1}) on $\cA_n$ with $M$ instead of $2M$ then there exists $\cA_{n+1}$ that is a collection of intervals with sizes greater than  $1/(2M N_{n+1}^2)$ such that $\f_{n}(\cA_{n+1}),\p_{n}(\cA_{n+1}) \subset \cD(N_{n+1}) $ and $\lambda(\cA_{n+1}) \geq (1-2dM^2 N_{n+1}^{-1}) \lambda(\cA_n)$.}

We now describe the inductive scheme that we obtain from an iterative application of Proposition \ref{mainprop}. At step $n$ we have $f_n=f_{\f_n}+ \Delta f_n$,$g_n=g_{\p_n}+ \Delta g_n$ defined for $t \in \cA_n$, with $\cA_{-1}=[0,1]$. We denote $\eps_{n,r}=\max(  \|\Delta f_n \|_{lip(\cA_n),r},\|\Delta g_n\|_{lip(\cA_n),r} )$. We obtain $h_n$ and $\f_{n+1}$ and $\p_{n+1}$ defined on $\cA_{n+1}$  such that 
\begin{align*} H_n f_n H_n^{-1}&= f_{\f_{n+1}} + {\Delta f_{n+1}} \\  H_n g_n H_n^{-1}&= g_{\p_{n+1}} + {\Delta g_{n+1}} \end{align*}
with ${\Delta f_{n+1}},{\Delta g_{n+1}} \in C^{lip(\cA_{n+1}),\infty}_0(I,\T^{d+1},\R^{d+1})$,
and if we denote $\xi_{n,r}=\|h_n\|_{lip(\cA_{n+1}),r+1}$ and  $\nu_n=\max(\| \f_{n+1}-\f_n \|_{lip(\cA_{n+1})},\| \p_{n+1}-\p_n \|_{lip(\cA_{n+1})}) $ we have from Proposition \ref{mainprop} that
\begin{align}
 \label{hn} \xi_{n,r} &\leq C_r  \gamma_nN_n^{\si}\eps_{n,r} \\
\label{phin} \nu_n& \leq \eps_{n,0} \\
\label{epsn} \eps_{n+1,r} &\le  C_r  \gamma_n N_n^{\si} \eps_{n,0}\eps_{n,r} + C_{r,r'}  \gamma_n N_n^{\si+r-r'} \eps_{n,r'}
\end{align}
with $\gamma_n=(1+S_n+\eps_{n,0})^\si$.

{ If during the induction we can insure that $\sum \eps_{n,0} <M/100$ we can conclude from (\ref{phin}) and the definition of $M$ that for all $n$,  $\f_n$ and $\psi_{n}$ satisfy 
 on $\cA_n$ the inductive condition 
$$({\rm C1}) \quad 2\max(\|\f_n \|_{lip(\cA_n)},\|\p \|_{lip(\cA_n)}) \leq M, \quad \inf_{t\in \cA_n}\varphi_n'(t) \geq \frac{1}{M}$$}

and Lemma \ref{exclusion} will insure that  $\cA_{n+1}$ is well defined and $\lambda(\cA_{n+1}) \geq (1-2M^2 N_{n+1}^{-1}) \lambda(\cA_n)$.
To be able to apply the inductive procedure we also have to check that $H_n$ is indeed invertible which is insured if during the induction we have 
$$({\rm C2}) \hspace{4cm}  \xi_{n,0}<\frac{1}{2}. \hspace{8cm} \hfill$$  We call the latter two conditions the inductive conditions.

The proof that the scheme (\ref{hn})--(\ref{epsn}) converges provided an adequate control on $\eps_{0,0}$ and $\eps_{r_0,0}$ for a sufficiently large $r_0$ is classical but we provide it for completeness. 

\begin{lemm} \label{kam} Let $\a=4\si+2$, $\beta=2 \sigma +1$, and $r_0=[8\si+5]$. If  $S_n,\xi_{n,r},\eps_{n,r}$ satisfy (\ref{hn})--(\ref{epsn}), there exists $\bar{N}_0(\si)$ such that if $N_0=\bar{N}_0 M$ and  
$$\eps_{0,0}\leq N_0^{-\a}, \quad \eps_{0,r_0}\leq N_0^{\beta}$$ then 
for any $n$  the inductive conditions (C1) and (C2) are satisfied and in fact 
$\eps_{n,0}\leq N_n^{-\a}$, $\xi_{n,0}\leq N_n^{-\si}$, and for any $s \in \N$, there exists $\bC_r$ such that $\max(\eps_{n,s},\xi_{n,s}) \leq \bC_s N_n^{-1}$. 
\end{lemm}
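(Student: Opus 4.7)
\emph{Proof plan.} The statement is a standard Nash--Moser/KAM convergence bootstrap from the one-step estimates \eqref{hn}--\eqref{epsn}. I would argue by induction on $n$, carrying two principal running bounds, $\eps_{n,0}\leq N_n^{-\alpha}$ and $\eps_{n,r_0}\leq N_n^{\beta}$, together with auxiliary polynomial bounds $\eps_{n,s}\leq K_s N_n^{\delta_s}$ for every fixed $s\geq r_0$; the base case is exactly the hypothesis of the lemma, the bounds at higher $s$ coming from the $C^\infty$ smoothness of the initial perturbation and a mild book-keeping along the recursion. Throughout, the inductive condition (C1) forces $S_n\leq M/2$ and $\eps_{n,0}\leq 1$, so $\gamma_n\leq (2+M)^\sigma=:\Gamma$ is uniform and can be absorbed into all constants.

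\emph{The decay step.} Applying \eqref{epsn} with $r=0$ and $r'=r_0$ yields
\[
\eps_{n+1,0}\leq C\Gamma N_n^{\sigma}\eps_{n,0}^{2}+C\Gamma N_n^{\sigma-r_0}\eps_{n,r_0}\leq C\Gamma\bigl(N_n^{\sigma-2\alpha}+N_n^{\sigma+\beta-r_0}\bigr),
\]
and the arithmetic $\alpha=4\sigma+2$, $\beta=2\sigma+1$, $r_0=[8\sigma+5]$ is chosen precisely so that each exponent is below $-3\alpha/2=-\log_{N_n}N_{n+1}^{\alpha}$ by a positive margin; taking $\bar N_0(\sigma)$ large enough to swallow $C\Gamma$ closes the induction for $\eps_{n+1,0}\leq N_{n+1}^{-\alpha}$. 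The companion bound $\eps_{n+1,r_0}\leq N_{n+1}^\beta$ is obtained from \eqref{epsn} at $r=r_0$ and some $r'\gg r_0$: the first term is quadratically small by $\eps_{n,0}$ and the second is absorbed by choosing $r'$ large enough relative to the slowly-growing auxiliary bound on $\eps_{n,r'}$.

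\emph{Consequences.} From \eqref{hn} one reads off $\xi_{n,0}\leq C\Gamma N_n^{\sigma-\alpha}\leq N_n^{-\sigma}$, hence $\xi_{n,0}<1/2$, i.e.\ (C2), for $\bar N_0$ large. Since $\nu_n\leq\eps_{n,0}$ by \eqref{phin}, the series $\sum\nu_n$ is super-geometrically convergent and its total can be made smaller than $M/100$, which preserves (C1). Finally, for the universal high-order conclusion $\max(\eps_{n,s},\xi_{n,s})\leq\bar C_s N_n^{-1}$, I interpolate by Hadamard convexity between the decay bound $\eps_{n,0}\leq N_n^{-\alpha}$ and a fixed high-order polynomial bound $\eps_{n,R}\leq K_R N_n^{\delta_R}$, with $R=R(s)$ chosen large enough that the resulting exponent in $N_n$ is $\leq -1$; the corresponding estimate on $\xi_{n,s}$ then follows from \eqref{hn}.

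\emph{Main obstacle.} The delicate point is the propagation of $\eps_{n,r_0}\leq N_n^\beta$, since \eqref{epsn} at $r=r_0$ forces one to step out of the primary inductive bound to $\eps_{n,r'}$ with $r'>r_0$. Closing this requires maintaining a separate family of auxiliary polynomial-in-$N_n$ bounds on arbitrarily high Sobolev-type norms, whose individual propagation is itself a (sub-)KAM loop. The precise choice of $(\alpha,\beta,r_0)$ in the lemma is exactly the balance that makes the primary and auxiliary loops close simultaneously.
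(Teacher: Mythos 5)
Your overall architecture matches the paper's: a double induction carrying $\eps_{n,0}\le N_n^{-\alpha}$ and $\eps_{n,r_0}\le N_n^{\beta}$, with the low-norm step closed via \eqref{epsn} at $(r,r')=(0,r_0)$, then (C1), (C2) and $\xi_{n,0}\le N_n^{-\sigma}$ read off from \eqref{hn} and \eqref{phin}. The one point where you deviate is exactly the point you flag as the ``main obstacle,'' and there your proposal has a genuine gap: you propagate $\eps_{n,r_0}\le N_n^{\beta}$ by taking $r'\gg r_0$ in \eqref{epsn}, which forces you to maintain auxiliary polynomial bounds on $\eps_{n,r'}$ for arbitrarily high $r'$, and you concede that each of these is ``itself a (sub-)KAM loop'' without saying how that loop closes. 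As stated this is circular (each level appeals to a higher one), and invoking ``the precise choice of $(\alpha,\beta,r_0)$'' does not resolve it. The paper's resolution is the simple observation that no higher norm is needed: apply \eqref{epsn} with $r'=r=r_0$, so the loss-of-derivatives term is just $C_{r_0,r_0}\gamma_n N_n^{\sigma}\eps_{n,r_0}\le C N_n^{\sigma+\beta}$, and this is $\le N_{n+1}^{\beta}=N_n^{3\beta/2}$ precisely because $\beta=2\sigma+1>2\sigma$; the quadratic term is harmless since $\eps_{n,0}\le N_n^{-\alpha}$. The same device ($r=r'=s$) closes each fixed-order auxiliary bound $\eps_{n,s}\le\tilde C_s N_n^{\beta}$ for $n\ge n_s$ independently of all higher orders, which is what you actually need. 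You should make this explicit; once you do, your argument is complete.

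For the final conclusion $\max(\eps_{n,s},\xi_{n,s})\le\bC_s N_n^{-1}$ you take a genuinely different route from the paper: Hadamard interpolation between $\eps_{n,0}\le N_n^{-\alpha}$ and $\eps_{n,R}\le\tilde C_R N_n^{\beta}$ with $R=R(s)$ large. This is legitimate here because the relevant interpolation inequality for the $\|\cdot\|_{lip(I),s}$ scale is available (Proposition \ref{hadamard}(i)), and it gives $\eps_{n,s}\le C N_n^{-\alpha(1-s/R)+\beta s/R}\le\bC_s N_n^{-\sigma-1}$ for $R$ large, whence $\xi_{n,s}\le C N_n^{-1}$ by \eqref{hn}. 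The paper instead runs a second bootstrap directly through \eqref{epsn} with $r=s$, $r'=s'(s)$; your interpolation argument is shorter and buys the same decay, at the cost of needing the convexity estimates as an external input.
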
 

\begin{proof} We first prove by induction that for every $n$, $\eps_{n,0}\leq N_n^{-\a}$ and $\eps_{n,r_0}\leq N_n^{\beta}$, provided  $\bar{N}_0(\si)$ is chosen sufficiently large.

Assuming the latter holds for every $i\leq n$,  the inductive hypothesis (C1) and (C2) can be checked up to $n$ immediately from (\ref{hn}) and (\ref{phin}). 
Now, (\ref{epsn}) applied with $r=0$ and $r'=r_0$ yields 
\begin{align*} \eps_{n+1,0} &\le  C_0 N_n^{\si}(2+M)^\si N_n^{-2\a} + C_{0,r_0} N_n^{\si-r_0} N_n^{\beta} \\
&\leq N_{n+1}^{-\a} \end{align*}
provided $\bar{N}_0(\si)$ is sufficiently large.

On the other hand, applying (\ref{epsn}) with $r'=r=r_0$ yields 
\begin{align*} \eps_{n+1,r_0} &\le  C_{r_0} N_n^{\si}(2+M)^\si N_n^{-\a} N_n^\beta  + C_{r_0,r_0} N_n^{\si} N_n^{\beta} \\
&\leq N_{n+1}^{\beta} \end{align*}
provided $\bar{N}_0(\si)$ is sufficiently large.

To prove the bound on $\eps_{n,s}$ we start by proving that for any $s$, there exist $\tilde{C}_s$ and  $n_s$ such that for $n\geq n_s$ we have that $\eps_{n,s}\leq \tilde{C}_s N_n^\beta$. Let indeed $n_s$ be such that $N_{n_s}^{-1/10} ((1+M)^\si C_s+C_{s,s}) <1$. Let $\tilde{C}_s$ be such that $\eps_{n_s,s} \leq \tilde{C}_s N_{n_s}^\beta$. We show by induction that $\eps_{n,s} \leq  \tilde{C}_s  N_n^\beta$ for every $n\geq n_s$. Assume the latter true up to $n$ and  apply (\ref{epsn}) with $r=r'=s$ to get 

\begin{align*} \eps_{n+1,s} &\le  C_{s} N_n^{\si}(1+M)^\si  N_n^{-\a} \eps_{n,s} + C_{s,s} N_n^{\si} \eps_{n,s} \\
&\leq N_{n}^{\si+1/10} \eps_{n,s} \\ &\leq  \tilde{C}_s  N_n^{\si+1/10+\beta} \leq  \tilde{C}_s N_{n+1}^\beta. \end{align*}

We will now bootstrap on our estimates as follows. Let $s'(s)=s+[\sigma+\beta+\frac{3}{2}(\si+1)]+1$, and define $\tilde{n}_s=\max(n_s,n_{s'})$. Let $\bar{C}_s$ be such that $\eps_{\tilde{n}_s,s} \leq \bar{C}_s N_n^{-\sigma-1}$. We will show by induction that for any $n \geq \tilde{n}_s$ we have that $\eps_{n,s} \leq \bar{C}_s N_n^{-\sigma-1}$. Indeed, apply   (\ref{epsn}) with $r=s$ $r'=s'$ to get 

\begin{align*} \eps_{n+1,s} &\le  \bar{C}_s C_{s} N_n^{\si}(1+M)^\si  N_n^{-\a} N_n^{-\si-1}  + C_{s,s'} \tilde{C}_{s'}  N_n^{\beta} N_n^{\si+s-s'} \\
&\leq \bar{C}_s N_{n+1}^{-\si-1} \end{align*}
if $n_s$ was chosen sufficiently large.

Finally, (\ref{hn}) yields that for $n\geq \tilde{n}_s$, $\xi_{n,s} \leq  C'_s N_n^{-1}$.

\end{proof}

\noindent{\it Proof of the main theorem.} 

The sets $\cA_n$ are decreasing and we let $\cA_\infty=\liminf \cA_n$.
The result of Lemma \ref{kam} implies that 
$$\lambda ( \cA_\infty )\geq \Pi (1-2M^2 N_{n+1}^{-1}) \geq 1-\eta$$ 
if $N_0\geq N_0(\eta)$. On $\cA_\infty$, $\f_n$ and $\p_n$ converge in the Lipschitz norm and 
the maps $H_n \circ \ldots \circ H_1,H_n^{-1} \circ \ldots \circ H_1^{-1}$ converge in the $C^{lip,\infty}$ norm to some $G,G^{-1}$ such that $G f_{\f} G^{-1} = f_{\f_\infty}$, $G g_{\p} G^{-1} = g_{\p_\infty}$, where $(\f_\infty,\p_\infty) = \lim_{n\to \infty} (\f_n,\p_n)$.

\section{Proof of Theorem \ref{main} in the case of higher dimensional elliptic factors, $d_2>1$} \label{generald2}

Define instead of the set $\cD(N,A)$ of Section \ref{section.inductive} the following 
\begin{multline*} \cD(N,A) = \{ \a \in \T^{d_2} \ / \ |\l + e^{i2\pi  (k,\a )} | \geq N^{-b}, \\ \forall \l \in \mathcal E(A), \forall  k \in \Z^{d_2}-\{0\},  \|k\|\leq N\} \end{multline*}
where $b=30d_2^2$. 
Instead of Lemma \ref{exclusion} we have the following more general statement.
\begin{lemm} \label{exclusiond} Let $\nu>0$. There exists $N_0(\nu,d_2)$ such that if $N>N_0$  and  if $I$ is an interval of size $1\geq |I|\geq 1/N^a$, $a=4d_2+20$, and if $\varphi : I \to \T^{d_2}$ satisfies a Pyartli  condition with constant $\nu$, then for $\tilde{N}=N^{3/2}$, there exists a union of disjoint intervals $\cU = \{ \tI_j\}$ such that $\tI_j \in \cD(\tilde{N},A)$ and $\tI_j \subset I$ and $| \tI_j|\geq 1/\tilde{N}^a$ and $\sum | \tI_j| \geq (1-\tilde{N}^{-1})  |I|$.
\end{lemm}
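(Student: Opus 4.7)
The plan is to mimic the argument of Lemma \ref{exclusion}: identify the set of bad parameters, bound its measure, and prune the good set into intervals of the required minimum length. The one-dimensional Diophantine estimate is replaced by a Pyartli-type sublevel-set lemma adapted to the $d_2$-dimensional elliptic factor; I read the statement $\tilde I_j \in \cD(\tilde N,A)$ as $\varphi(\tilde I_j)\subset \cD(\tilde N,A)$, as in Lemma \ref{exclusion}.

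First I would extract from the Pyartli hypothesis the quantitative transversality: for every $t \in I$ and $k \in \Z^{d_2}\setminus\{0\}$,
$$\max_{1 \leq j \leq d_2} |(k, \varphi^{(j)}(t))| \geq c_1(\nu, d_2) \|k\|,$$
which follows from Cramer's rule applied to the matrix $M(t)$ whose columns are $\varphi^{(j)}(t)$: the bounds $|\det M(t)|\geq \nu$ and $\|\varphi\|_{d_2}\leq \nu^{-1}$ yield $\|M(t)^{-T}\|\leq C(d_2)\nu^{-2}$, hence $\|M(t)^T k\|_\infty\geq c_1(\nu,d_2)\|k\|$. The classical Pyartli--R\"ussmann sublevel lemma then produces, for each $c\in\R$,
$$|\{t \in I : |(k, \varphi(t)) - c| < \epsilon\}| \leq C_2(\nu, d_2) (\epsilon/\|k\|)^{1/d_2}.$$
Only eigenvalues $\lambda$ of modulus one are relevant, and for such $\lambda = -e^{i2\pi\theta_\lambda}$ the bad set $B_{k,\lambda}:=\{t\in I : |\lambda + e^{i2\pi(k,\varphi(t))}| < \tilde N^{-b}\}$ is the preimage under $t\mapsto (k,\varphi(t))\bmod 1$ of a circle arc of length $\lesssim \tilde N^{-b}$ around $\theta_\lambda$. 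Since $(k,\varphi(\cdot))$ has range of length $\leq \nu^{-1}\|k\||I|$ on $I$, summing the sublevel bound over the $\leq \nu^{-1}\|k\||I|+1$ relevant integer translates gives
$$|B_{k, \lambda}| \leq C_3(\nu, d_2)\,(\|k\| |I| + 1)\,\bigl(\tilde N^{-b}/\|k\|\bigr)^{1/d_2}.$$

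Summing this over $0 < \|k\| \leq \tilde N$ and finitely many $\lambda \in \mathcal E(A)$, and using $|I|\geq N^{-a}=\tilde N^{-2a/3}$ with $a=4d_2+20$ and $b=30d_2^2$, the resulting polynomial factors in $\tilde N$ are absorbed by $\tilde N^{-b/d_2}$, giving $|\bigcup_{k,\lambda}B_{k,\lambda}|\leq \tfrac12 \tilde N^{-1}|I|$ provided $\tilde N\geq N_0(\nu,d_2)$. For the pruning step, each $B_{k,\lambda}$ has at most $C_4(\nu,d_2)(\|k\||I|+1)$ connected components (by bounding the number of sign changes of $(k,\varphi(t))-\theta_\lambda$ via the Pyartli condition), so the open complement $I\setminus \bigcup_{k,\lambda}B_{k,\lambda}$ is a disjoint union of at most $C_5 \tilde N^{d_2+1}|I|$ intervals. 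Discarding those shorter than $1/\tilde N^a$ loses at most $C_5 \tilde N^{d_2+1-a}|I|\leq \tfrac12 \tilde N^{-1}|I|$, and the two losses combined yield the claimed $\sum_j |\tilde I_j|\geq (1-\tilde N^{-1})|I|$ with each $|\tilde I_j|\geq 1/\tilde N^a$.

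The main obstacle is the bookkeeping of two polynomial excesses, $\tilde N^{d_2+1-b/d_2}$ from the measure estimate and $\tilde N^{d_2+1-a}$ from the pruning step, which must both be forced below $\tilde N^{-1}$; this is precisely what pins down the choices $a=4d_2+20$ and $b=30d_2^2$. The subtlest ingredient is the bound on the number of connected components of each $B_{k,\lambda}$: it relies on the standard consequence of the Pyartli condition that any nontrivial real linear combination of $\varphi',\ldots,\varphi^{(d_2)}$ has a uniformly bounded number of zeros on $I$, transported to $(k,\varphi)-\theta_\lambda$ by Rolle's theorem. Everything else is a direct counting argument once the right Pyartli--R\"ussmann estimate has been invoked.
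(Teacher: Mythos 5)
Your proof is correct and follows the same architecture as the paper's: for each frequency $k$ with $0<\|k\|\leq \tilde N$ and each unit-modulus eigenvalue, exclude the parameters where the small divisor is below $\tilde N^{-b}$, bound the excluded measure using the quantitative transversality $\max_j|(k,\varphi^{(j)})|\gtrsim_\nu\|k\|$ coming from the Pyartli determinant, and finally discard the surviving intervals shorter than $\tilde N^{-a}$. The only difference is one of implementation: the paper first excises $d_2$ small intervals per $k$ to force $|(k,\varphi')|\geq N^{-a(d_2+1)}$ and then uses monotonicity (intermediate value theorem) to locate and count the $\mathcal O(N)$ bad intervals directly, whereas you invoke the Pyartli--R\"ussmann sublevel-set estimate for the measure and add a separate zero-counting argument for the number of connected components; both routes close with the stated exponents $a=4d_2+20$, $b=30d_2^2$ (your displayed pruning bound $C_5\tilde N^{d_2+1-a}|I|$ should carry the extra factor $\tilde N^{2a/3}$ from the $+1$ in $\|k\||I|+1$, but the inequality $a>3d_2+6$ still absorbs it).
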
 

\begin{proof} The proof is a direct consequence of the Pyartli condition and a repeated application of the intermediate value theorem. We just deal with case $\l=1$ the other cases being similar. More precisely, for any fixed $k$, $\|k\|\leq N$, after excluding $d_2$ intervals of size $1/N^{a}$ from $I$ we get that $|(k,\varphi')|\geq N^{-a(d_2+1)}$. After excluding $\mathcal O(N)$ intervals of size $N^{a(d_2+1)-b}$ we remain with intervals on which $\|(k,\varphi)\|\geq N^{-b}$. We then apply this procedure for every $k \in \Z^{d_2}$ such that $0<\|k\|\leq N$, then further eliminate all the intervals that are smaller than $\tilde{N}^{-a}$,  and finally observe that the remaining part of $I$ is a union of intervals satisfying the conditions of the lemma. 
\end{proof}

The effect of changing the exponent in the definition of $\cD(N,A)$ just modifies $\sigma(A,B)$ of Proposition \ref{mainprop} to make it $\sigma(A,B,d_2)$. This is because in (\ref{h-0}) the small divisor (in the case $|\lambda|=1$) becomes \newline $\frac{1}{|\l-e^{2\pi i (m,\vp(t))}|} \leq N^b$ if $m\in \Z^{d_2}$ is such that $|m|\leq N$. The rest of the proof of Proposition \ref{mainprop} is identical to the case $d_2=1$, except that everywhere the Lipschitz norm in the parameter direction should be replaced by the $C^{d_2}$ norm. If we assume WLOG that $\f$ satisfies an initial Pyartli condition with constant $\nu$, then similarly to what was done in the case $d_2=1$, we insure in the KAM scheme that a Pyartli condition with a fixed constant $\nu/2$ is satisfied by the functions $\varphi_n$, provided the control on the perturbation $\eps$ is sufficiently small.

\section{Proof of Theorem \ref{main2}} \label{proofmain2}

Let $A,B,\a,\beta$ and $f,g$ be as in the statement of Theorem \ref{main2}. 
Let us momentaneously assume that $\a \in \text{DC}(\tau,\gamma,A)$ that is $ |\l - e^{i2 \pi (k,\a)} |> \frac{\gamma}{|k|^\tau}$ for every non zero vector $k \in \Z^{d_2}$ and every $\l \in \mathcal E(A)$.
This clearly plays a similar role to $\varphi(t) \in \cD(A)$ and the same proof as that of Proposition \ref{mainprop} yields a conjugacy $ H=\id + h$ such  that 
\begin{equation}\label{nonlin2}
\begin{aligned} H \circ f  &= (\tilde{f}_0 + \widetilde{\Delta f}) \circ H \\
H \circ g  &= (\tilde{g}_0 + \widetilde{\Delta g}) \circ H \end{aligned}
 \end{equation}
with $  \tilde{f}_0= A \times R_{\tilde{\a}},  \tilde{g}_0= B\times R_{\tilde{\beta}}$ and  $h, \widetilde{\Delta f}, \widetilde{\Delta g}$ satisfy estimates as in Proposition \ref{mainprop}. Now, the fact that $(\rho_{\mu_1}(f),\rho_{\mu_2}(g))=(\a,\beta)$ implies that $(\rho_{H_* \mu_1}(H \circ f \circ H^{-1}),\rho_{H_*\mu_2}(H \circ g \circ H^{-1}))=(\a,\beta)$. In conclusion we can replace $\tilde{f}_0,\tilde{g}_0$ by $A\times R_\a, B\times R_\beta$ in (\ref{nonlin2}) and include $\tilde{\a}-\a$,$\tilde{\b}-\b$ inside the error terms without changing the quadratic nature of the estimates. 

For the general case $(\a,\beta) \in \text{SDC}(\tau,\gamma,A,B)$ one cannot use just one of the frequencies $\a$ or $\beta$ to solve the linearized equations of (\ref{simpleconj}). Indeed, both $\a$ and $\beta$ may be Liouville vectors and the small divisors that appear in (\ref{h-0}) may be too large. Actually the linearized system  (\ref{simpleconj}) will not be solved as in Claim 1 but just up to an error term that is quadratic as in Lemma \ref{splittinglemma}. The idea goes back to Moser \cite{M} who observed that 
if for each $m$ one of the small divisors ${\l-e^{2\pi i m\a}}$  or ${\mu-e^{2\pi i m\beta}}$ is not too small, as stated in the SDC condition, then the relation implied by the commutation
(\ref{simplecom}) 
 \begin{align*}
(\la w-w\circ f_\f)-(\mu v-v\circ g_\p)=\phi
\end{align*}
insures that (\ref{simpleconj}) can be solved up to an error term of the order of $\phi$, that is a quadratic error term as in \eqref{quad}.

 The rest of the proof of Theorem \ref{main2} is identical to that of Theorem \ref{main3}. $\hfill \Box$

\section{Appendix}

In the Appendix we give references and proofs for the estimates used in the proofs of Lemma \ref{splittinglemma} and Proposition \ref{mainprop}.

\subsection{Convexity estimates}

\begin{prop} \label{hadamard} 
Let $f,g \in  C^{lip,\infty}(I, \T^{d},\R)$. Then

\begin{itemize}
\item[(i)]
$$
||f||_{lip(I),s}\le C_{s_1,s_2}||f||_{lip(I),s_1}^{a_1}||f||_{lip(I),s_2}^{a_2}$$
for all non-negative numbers $a_1,a_2,s_1,s_2$ such that
$$
a_1+a_2=1,\quad s_1a_1+s_2a_2=s.$$

\item[(ii)]
$$
||fg||_{lip(I),s}\ \le\ C_s(||f||_{lip(I),s}||g||_{lip(I),0}+||f||_{lip(I),0}||g||_{lip(I),s})$$
for all non-negative numbers $s$.
\end{itemize}
\end{prop}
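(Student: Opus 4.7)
Both inequalities are classical Hadamard/convexity estimates; the only mild twist is the mixed nature of the norm $\|\cdot\|_{lip(I),s}$, which combines the sup in $t$ of the $C^s_x$ norm with the Lipschitz constant in $t$ of each spatial derivative. The plan is to reduce everything to the classical convexity inequality for $C^s$ norms on the torus
\[
\|F\|_{C^s(\T^d)}\le C_{s_1,s_2}\|F\|_{C^{s_1}(\T^d)}^{a_1}\|F\|_{C^{s_2}(\T^d)}^{a_2},\qquad s_1a_1+s_2a_2=s,\ a_1+a_2=1,
\]
which is standard (e.g.\ via Littlewood--Paley/dyadic Fourier decomposition, or via iterated integration by parts starting from the case $s_1=0$, $s_2=2$, $s=1$ and bootstrapping).

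For part (i), I would decompose $\|f\|_{lip(I),s}$ into its two natural pieces: the pointwise-in-$t$ part $\sup_{t\in I}\|f(t,\cdot)\|_{C^s}$, and the Lipschitz-in-$t$ part $\sup_{t\ne t'}\|f(t,\cdot)-f(t',\cdot)\|_{C^s}/|t-t'|$. For the first piece, apply the classical convexity inequality above to $f(t,\cdot)$ for each fixed $t$, then take the supremum using $\sup(AB)\le(\sup A)(\sup B)$ for the factors on the right. For the second piece, apply the same classical inequality to the difference $F(\cdot)=f(t,\cdot)-f(t',\cdot)$, divide through by $|t-t'|$, and take the supremum over $t\ne t'$. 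The resulting bounds both have the required form $\|\cdot\|^{a_1}\|\cdot\|^{a_2}$, and the max is controlled by the $\|\cdot\|_{lip(I),s_j}$ norms on the right-hand side. The only small point to keep track of is that $|\iota|$ is defined as the maximal coordinate rather than the sum, but this affects only the constants $C_{s_1,s_2}$, not the exponents.

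For part (ii), I would apply the Leibniz rule to $(fg)^{(\iota)}$ with $|\iota|\le s$, obtaining a finite sum of terms $f^{(\iota')}g^{(\iota'')}$ with $|\iota'|+|\iota''|\lesssim s$. Each such term, treated in the sup-in-$t$ part, is bounded by Cauchy--Schwarz-type estimates combined with part (i) applied separately to $f$ (with exponents $1-|\iota'|/s,\ |\iota'|/s$) and to $g$ (with exponents $|\iota'|/s,\ 1-|\iota'|/s$); the products of fractional powers recombine via Young's inequality $XY\le X^p/p+Y^q/q$ (with $p=s/|\iota'|$, $q=s/|\iota''|$) into the required symmetric form $\|f\|_{s}\|g\|_{0}+\|f\|_{0}\|g\|_{s}$. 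For the Lipschitz-in-$t$ part, I would use the telescoping identity
\[
f(t,x)g(t,x)-f(t',x)g(t',x)=(f(t,x)-f(t',x))g(t,x)+f(t',x)(g(t,x)-g(t',x)),
\]
differentiate in $x$ via Leibniz, divide by $|t-t'|$, and apply the same convexity-plus-Young procedure to each of the two resulting pieces.

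The main (and really the only) technical obstacle is the classical pure-space Hadamard inequality on $\T^d$; once that is in hand, the passage to the mixed norm $\|\cdot\|_{lip(I),s}$ is routine because both the sup-in-$t$ and the Lipschitz-in-$t$ operations commute with the spatial convexity estimate (the first trivially, the second by applying the inequality to the $t$-difference as a function of $x$). Standard references such as Hamilton's survey on the Nash--Moser theorem or Zehnder's KAM paper contain both (i) and (ii) essentially verbatim, so in the write-up I would state the classical inequality as a black box, give the two-line reduction for part (i), and detail the Leibniz-plus-convexity combination for part (ii).
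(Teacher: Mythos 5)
Your proposal is correct and follows essentially the same route as the paper: treat the spatial convexity inequality as classical, and obtain (ii) from (i) via Leibniz, the ``line segment'' product estimate (your convexity-plus-Young step is exactly Hamilton's Corollary 2.2.2), and the telescoping bound $\mathrm{Lip}(fg)\le L_f\|g\|_0+\|f\|_0L_g$ for the $t$-Lipschitz part, which is precisely the inequality the paper writes out. The one genuine (minor) difference is in (i): the paper invokes interpolation directly in the mixed scale $C^{lip,s}$ by citing Zehnder's smoothing-operator construction (or de la Llave--Obaya), whereas you derive the mixed-norm inequality from the pure-space one by applying it separately to $f(t,\cdot)$ and to the difference $f(t,\cdot)-f(t',\cdot)$ and dividing by $|t-t'|=|t-t'|^{a_1}|t-t'|^{a_2}$. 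Your reduction is slightly more self-contained and elementary; the paper's citation is shorter but leans on the references to handle the parameter dependence. You are also right to flag that the paper's norm uses the maximal coordinate of the multi-index rather than its total degree, so the black-boxed spatial inequality must be checked (or re-proved coordinatewise) for this anisotropic grading; the paper glosses over this point entirely, and it indeed only affects the constants.
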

\begin{proof}
(i) One way to show interpolation estimates in the scale of $C^{lip, s}$ norms is to derive them from the existence of smoothing operators and from the norm inequalities for the smoothing operators. This is done  in \cite{Zehnder} for spaces $C^{\alpha, s}$ where $0<\alpha\le 1$, which includes the case of $C^{lip, s}$. Another elementary proof for interpolation without going through smoothing operators can be found in \cite{OdlL}. 

(ii) Immediate corollary of the interpolation estimates is the following fact:
$$\|f\|_{lip(I), i}\|g\|_{lip(I), j}\le C (\|f\|_{lip(I), k}\|g\|_{lip(I), l}+\|f\|_{lip(I), m}\|g\|_{lip(I), n})$$
if $(i,j)$ lies on the line segment joining $(k, l)$ and $(m, n)$. (See Corollary 2.2.2. in \cite{Ham}).
The statement (ii) in the Proposition follows from this by using the product rule on derivatives (see Corollary 2.2.3. in \cite{Ham}) and the following inequality: 
\begin{equation*}
\begin{aligned}
Lip (fg)&=\sup_{x\ne y} \frac{|(fg)(x)-(fg)(y)|}{|x-y|} \\
&\le \sup ( \frac{|f(x)-f(y)||g(x)|}{|x-y|} + \frac{|g(x)-g(y)||f(y)|}{|x-y|})\\
&\le L_f\|g\|_0+\|f\|_0L_g
\end{aligned}
\end{equation*}
where $L_f$ and $L_g$ are Lipshitz constants for $f$ and $g$, respectively.
\end{proof}

\subsection{Composition}

\begin{prop} \label{annexe.compose} 
Let $f,g \in  C^{lip,\infty}(I, \T^{d+1},\R^{d+1})$.Then
\begin{itemize}

\item[(i)]
$h(x)=f(x+g(x))-f(x)$ verifies
$$ \| h\|_{lip(I),s}
\leq C_s(\| f\|_{lip(I),0} \|g\|_{lip(I),s+1}+ \|  f\|_{lip(I),s+1}\|g\|_{lip(I),0}).$$ 

\item[(ii)]
$k(x)=f(x+g(x))-f(x)-Df g(x)$ verifies
$$ \| k\|_{s}\leq 
C_s (\|  f\|_{lip(I),0} \|g\|_{lip(I),s+2}+ \|  f\|_{lip(I),s+2}\|g\|_{lip(I),0}) $$
\end{itemize}

\end{prop}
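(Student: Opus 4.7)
The plan is to reduce both estimates to composition inequalities for single functions via integral representations, and then apply Fa\`a di Bruno's formula together with the convexity inequalities of Proposition~\ref{hadamard}. The estimates are classical in KAM theory (see e.g.\ \cite{Zehnder} or \cite{Ham}), and the argument carries over essentially verbatim to the $C^{lip,\infty}$ scale because, as Proposition~\ref{hadamard} already records, the interpolation inequalities remain valid in this scale.

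For (i), I would write by the fundamental theorem of calculus
\begin{equation*}
h(x) = \int_0^1 Df\bigl(x+tg(x)\bigr)\cdot g(x)\, dt,
\end{equation*}
so that estimating $\|h\|_{lip(I),s}$ reduces to estimating the $s$-th spatial derivatives of the integrand uniformly in $t\in[0,1]$. Applying Fa\`a di Bruno to the inner composition and Leibniz to the product with $g(x)$ yields a finite sum of terms of the form
\begin{equation*}
(D^{k+1}f)\bigl(x+tg(x)\bigr)\cdot \prod_{j=1}^{k}(D^{\iota_j}g)(x) \cdot (D^{\iota_0}g)(x),
\end{equation*}
with $(k+1)+\iota_0+\sum_{j}\iota_j = s+1$. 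Bounding each $(D^{k+1}f)(x+tg(x))$ uniformly by $\|f\|_{lip(I),k+1}$ and each factor $D^{\iota}g$ by $\|g\|_{lip(I),\iota}$ and applying Proposition~\ref{hadamard}(i)-(ii) repeatedly collapses every such product into the symmetric form $\|f\|_{lip(I),0}\|g\|_{lip(I),s+1}+\|f\|_{lip(I),s+1}\|g\|_{lip(I),0}$, which is precisely the desired bilinear bound.

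For (ii), the same scheme starts from the second-order Taylor formula
\begin{equation*}
k(x) = \int_0^1 (1-t)\, D^2f\bigl(x+tg(x)\bigr)\bigl(g(x),g(x)\bigr)\, dt,
\end{equation*}
which carries one extra derivative on $f$ and one extra factor of $g$ compared with (i); the same Fa\`a di Bruno/Leibniz/interpolation argument then gives the bound with $s+2$ in place of $s+1$. The Lipschitz dependence in the parameter $t\in I$ is handled in exactly the same way, since by definition the $\|\cdot\|_{lip(I),s}$ norm bounds both spatial derivatives and their Lipschitz constants in $t$; differentiating formally in $t$ produces the same type of products which are again controlled by Proposition~\ref{hadamard}. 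The only bookkeeping obstacle is organizing the multi-index sums so that the top order $s{+}1$ (respectively $s{+}2$) falls on one factor at a time, which is exactly what the convexity inequalities are designed to accommodate.
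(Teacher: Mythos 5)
Your proposal is correct and follows essentially the same route as the paper's proof: both reduce $h$ and $k$ to products of derivatives of $f$ (composed with $\mathrm{Id}+g$) and derivatives of $g$ — you via the first- and second-order Taylor integral representations and Fa\`a di Bruno, the paper by differentiating once and invoking the product rule — and both then symmetrize the resulting bilinear terms using the interpolation and product estimates of Proposition~\ref{hadamard}, extended to the $C^{lip,\infty}$ scale. The only cosmetic difference is that the paper treats the zeroth-order norm separately by a mean value estimate rather than through the integral formula, which is mathematically the same step.
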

\begin{proof} 
In the proof we shorten the notation $\|\cdot\|_{lip(I), s}$ to $\|\cdot\|_{lip, s}$.

(i) It suffices to prove the estimates for the coordinate functions of $f$, so in what follows we assume that $f$ denotes a coordinate function of $f$. Let $D_i^1$ denote partial derivation in one of the basis directions and let $g_j$ denote coordinate functions of $g$. Since $D_i^1 h=D_i^1(f(x+g(x))-f(x))=\sum_j D_j^1fD_i^1g_j$, we can apply part (ii) of the previous proposition to $D_j^1fD_i^1g_j$:

\begin{equation*}
\begin{aligned}
\|D^1 h\|_{lip, s}&\le C\max_j \|D_j^1fD_i^1g_j\|_{lip, s} \\
&\le C_s\max_j(\|D_j^1 f\|_{lip, s}\|D_i^1g_j\|_{lip, 0}+\|D_j^1 f\|_{lip, 0}\|D_i^1g_j\|_{lip, s})\\
&\le C_s\max_j (\|f\|_{lip, s+1}\|g_j\|_{lip, 1}+\|f\|_{lip, 1}\|g_j\|_{lip, s+1})\\
&\le C'_s(\|f\|_{lip, s+2}\|g\|_{lip, 0}+\|f\|_{lip, 0}\|g\|_{lip, s+2})
\end{aligned}
\end{equation*}
where we invoked part (ii) of the previous proposition again to obtain the last line of estimates above. 
Since for the $lip, 0$-norm we have:
$$\|h\|_{lip, 0}=\|f(x+g(x))-f(x)\|_{lip, 0}\le L_f\|g\|_0\le \|f\|_{lip, 0}\|g\|_{lip, 0},$$
the claim follows. 

(ii) Again by reducing to coordinate functions we look at one coordinate function of $k$ and $f$ (which we denote by $k$ and $f$ as well), so we have
$k=f(x+g(x))-f-\sum_i D_i^1f g_i$, where $D_i^1$ denotes $\partial/\partial x^i$.
Then:
$D_j^1k=-\sum_iD_j^1D^1_i f g_i$, where $g_i$ denotes coordinate functions of $g$. This implies (by using (ii) of Proposition \ref{hadamard}) the following estimate for the first derivatives:
\begin{equation*}
\begin{aligned}
\|D_j^1 k\|_{lip, s}&\le \sum_i \|D_j^1D^1_i f g_i\|_{lip, s} \\
&\le C_s(\|D_j^1D^1_i f\|_{lip, s}\|g_i\|_{lip, 0}+\| D_j^1D^1_i f\|_{lip, 0}\|g\|_{lip, s})\\
&\le C_s(\|f\|_{lip, s+2}\|g_i\|_{lip, 0}+\|f\|_{lip, 2}\|g\|_{lip, s})\\
&\le C'_s(\|f\|_{lip, s+2}\|g\|_{lip, 0}+\|f\|_{lip, 0}\|g\|_{lip, s+2})
\end{aligned}
\end{equation*}
For the $lip, 0$-norm we have:
$$\|k\|_{lip, 0}\le L_f\|g\|_0+\max_i\{\|D_i^1 fg_i\|_{lip, 0}\}\le C\|f\|_{lip, 1}\|g\|_{lip, 0}$$
which together with the estimates above implies the claim.
\end{proof}
\subsection{Inversion}

\begin{prop} \label{annexe.inverse} 
Let $h \in  C^{lip,\infty}(I, \T^{d+1},\R^{d+1})$  and assume that
$$\|h\|_{lip(I),1}\leq \frac{1}{2}$$
Then
$$f: \T^{d+1} \to \T^{d+1} , x\mapsto  H(x)=x+ h(x)$$
is invertible and if we write $H^{-1}(x)=x+\bar h (x)$ then
$$\|\bar h\|_{lip(I),s}  \leq C_s \|h\|_{lip(I),s} $$
for all $s\in\N$.
\end{prop}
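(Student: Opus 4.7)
The plan is to prove invertibility first, then bound $\bar h$ inductively using the two identities
\[
\bar h = -h \circ (\mathrm{Id}+\bar h), \qquad DH^{-1} = \bigl(I+Dh \circ H^{-1}\bigr)^{-1}.
\]

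For each fixed $t \in I$, I would first observe that the hypothesis $\|h\|_{lip(I),1}\le 1/2$ implies $\|Dh_t\|_0\le 1/2$, so $H_t$ is a local diffeomorphism by the inverse function theorem. Globally, $h_t$ is $\tfrac{1}{2}$-Lipschitz, which makes $H_t$ bi-Lipschitz and injective; a degree argument on $\TT^{d+1}$ (or equivalently the Banach fixed-point theorem applied to the map $u \mapsto -h_t(y + u)$, a $\tfrac{1}{2}$-contraction on continuous sections) then shows $H_t$ is a $C^\infty$ bijection. Writing $H_t^{-1}(x) = x + \bar h_t(x)$, the defining identity $H \circ H^{-1} = \mathrm{Id}$ becomes $\bar h(x) = -h(x+\bar h(x))$. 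Lipschitz dependence of $\bar h$ in $t$ comes from subtracting this identity at $t$ and $t'$: from $\bar h_t(x) - \bar h_{t'}(x) = h_{t'}(x+\bar h_{t'}(x)) - h_t(x+\bar h_t(x))$ and $\|Dh\|_0\le 1/2$, one gets $\|\bar h_t - \bar h_{t'}\|_0 \le 2\|h\|_{lip(I),0}|t-t'|$, which yields the $s=0$ bound $\|\bar h\|_{lip(I),0} \le 2\|h\|_{lip(I),0}$.

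For $s\ge 1$, I would proceed by induction using the derivative formula $D\bar h = -(I+P)^{-1}P$ with $P := Dh \circ H^{-1}$. The key analytical input is that since $\|P\|_0 \le 1/2$ the Neumann series $(I+P)^{-1}=\sum_{k\ge 0}(-P)^k$ converges with operator bound $\le 2$, and by repeated application of Proposition~\ref{hadamard}(ii) one gets
\[
\|(I+P)^{-1}P\|_{lip(I),s} \;\le\; C_s\|P\|_{lip(I),s}
\]
(the higher-order terms in the Neumann series are absorbed using $\|P\|_0\le 1/2$ and the product inequality). It remains to estimate $\|P\|_{lip(I),s}$. Writing $P = Dh + \bigl(Dh\circ(\mathrm{Id}+\bar h) - Dh\bigr)$ and applying Proposition~\ref{annexe.compose}(i) with $f = Dh$, $g = \bar h$, we obtain
\[
\|P\|_{lip(I),s} \le \|h\|_{lip(I),s+1} + C_s\bigl(\|h\|_{lip(I),1}\|\bar h\|_{lip(I),s+1} + \|h\|_{lip(I),s+2}\|\bar h\|_{lip(I),0}\bigr).
\]
Combining with $\|\bar h\|_{lip(I),s+1}\le C(\|D\bar h\|_{lip(I),s}+\|\bar h\|_{lip(I),0})$ and absorbing the small factor $C_s\|h\|_{lip(I),1}\le C_s/2$ on the left, one concludes the induction on $s$ and obtains $\|\bar h\|_{lip(I),s}\le C_s\|h\|_{lip(I),s}$.

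The main obstacle is the apparent loss of derivatives in the composition estimate of Proposition~\ref{annexe.compose}(i), which brings an $\|\bar h\|_{lip(I),s+1}$ on the right when one wants to bound $\|\bar h\|_{lip(I),s}$ on the left. I expect to resolve this by not applying the fixed-point identity $\bar h = -h\circ(\mathrm{Id}+\bar h)$ directly, but rather by differentiating first and using the Neumann-series formula for $DH^{-1}$; then the ``$+1$'' on the right matches the derivative of $\bar h$ on the left, and the residual terms with $\|\bar h\|_{lip(I),s+1}$ carry the small prefactor $\|h\|_{lip(I),1}\le 1/2$ and can be absorbed. The tracking of the Lipschitz-in-$t$ norm through this argument is conceptually identical to the $x$-derivative case, the key point being that Proposition~\ref{annexe.compose} is already stated in the $\|\cdot\|_{lip(I),s}$ norm, so no extra work is needed to handle the parameter dependence.
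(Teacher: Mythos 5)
Your overall architecture (invertibility by contraction, then $D\bar h=-(I+P)^{-1}P$ with $P=Dh\circ H^{-1}$, then induction on $s$) is the standard one, and your invertibility and $s=0$ steps are fine. For what it is worth, the paper does not carry this out at all: it simply invokes Lemma 2.3.6 of Hamilton \cite{Ham}, observing that the argument there only uses interpolation (Proposition \ref{hadamard}(i)) and hence applies verbatim to the $\|\cdot\|_{lip(I),s}$ scale. So you are supplying a proof where the paper supplies a citation; that is legitimate, but your version has a genuine gap.

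The gap is in the estimate of $\|P\|_{lip(I),s}$. You route the composition $Dh\circ(\mathrm{Id}+\bar h)$ through Proposition \ref{annexe.compose}(i), which is a \emph{difference} estimate: it gains smallness in $g=\bar h$ at the price of one extra derivative on the outer function $f=Dh$. This is exactly the wrong trade here. Concretely, your bound for $\|P\|_{lip(I),s-1}$ (the level needed to control $\|D\bar h\|_{lip(I),s-1}$ and hence $\|\bar h\|_{lip(I),s}$) contains the term $\|h\|_{lip(I),s+1}\|\bar h\|_{lip(I),0}$, and the product $\|h\|_{s+1}\|h\|_0$ cannot be interpolated down to $\|h\|_s\|h\|_1$ -- the convexity inequality of Proposition \ref{hadamard} goes in the opposite direction (a single high-frequency mode of small amplitude plus a constant gives a counterexample). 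So after absorbing the $\|\bar h\|_{lip(I),s}$ term your scheme closes on $\|\bar h\|_{lip(I),s}\le C_s\|h\|_{lip(I),s+1}$, i.e.\ with a loss of one derivative, not the stated loss-free bound. Differentiating first, as you propose, only cures the ``$+1$'' on $\bar h$, not the ``$+1$'' on $h$. The correct input is the plain tame composition estimate $\|u\circ(\mathrm{Id}+g)\|_{lip(I),m}\le C_m\bigl(\|u\|_{lip(I),m}+\|u\|_{lip(I),1}\|g\|_{lip(I),m}\bigr)$ valid when $\|g\|_{lip(I),1}$ is bounded (Fa\`a di Bruno plus interpolation; this is what Hamilton uses), applied with $u=Dh$, $g=\bar h$, after which the mixed term $\|h\|_2\|\bar h\|_{s-1}$ is interpolated onto the segment joining $(s,1)$ and $(1,s)$ and the induction closes without loss. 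A second, smaller issue: both your Neumann-series bound and your final absorption rely on ``$C_s\|h\|_{lip(I),1}\le C_s/2$ being absorbable,'' which fails once $C_s\ge 2$; the standard fix is to isolate the top-order term via the Leibniz rule so that its coefficient is exactly $\|Dh\|_0\le 1/2$ with no constant, all lower-order terms being handled by the inductive hypothesis and interpolation.
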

\begin{proof}
For $C^s$ norms this is proved for example in Lemma 2.3.6. in \cite{Ham}. The proof uses induction and interpolation estimates, and it is general to the extent that it applies to any sequence of norms on $C^\infty$ which satisfy interpolation estimates. Thus the claim follows from part (i) of the Proposition \ref{hadamard} and Lemma 2.3.6. in \cite{Ham}.\end{proof}

\end{document}